\documentclass[journal]{IEEEtran}
\def\baselinestretch{1.2}

\usepackage{tikz}
\graphicspath{{images/}}
\usepackage{amsmath}
\usepackage{latexsym, amssymb}
\usepackage{graphicx}
\usepackage{amsmath, amsbsy}
\usepackage{amsopn, amstext}
\usepackage{ifpdf,hyperref}
\usepackage{cancel, color}
\usepackage{epstopdf}

\def\ttimes{\,\rotatebox[]{-90}{$\ltimes$}\,}

\def\J{{\bf 1}}

\DeclareMathOperator{\Span}{Span}
\DeclareMathOperator{\Col}{Col}

\DeclareMathOperator{\lcm}{lcm}

\def\cal{\mathcal}
\def\T{{\bf T}}

\def\diag{diag}
\def\ra{\rightarrow}

\def\a{\alpha}
\def\b{\beta}
\def\d{\delta}
\def\e{\epsilon}

\def\l{\lambda}

\def\0{{\bf 0}}

\def\haa{\longleftrightarrow}
\newcommand{\vaa}{\mathbin{\text{\rotatebox[origin=c]{90}{$\longleftrightarrow$}}}}
\newcommand{\R}{{\mathbb R}}

\newcommand{\C}{{\mathbb C}}
\newcommand{\Z}{{\mathbb Z}}
\newcommand{\F}{{\mathbb F}}

\def\dsum{\mathop{\sum}\limits}

\newtheorem{thm}{Theorem}[section]
\newtheorem{dfn}[thm]{Definition}
\newtheorem{prp}[thm]{Proposition}
\newtheorem{exa}[thm]{Example}
\newtheorem{lem}[thm]{Lemma}
\newtheorem{cor}[thm]{Corollary}
\newtheorem{rem}[thm]{Remark}

\begin{document}

\title{From Eigenvalues/Eigenvectors of Hypermatrices to Canonical Form of Tensors}

\author{Daizhan Cheng
\thanks{This work is supported partly by the National Natural Science Foundation of China (NSFC) under Grants 62350037.}
\thanks{Daizhan Cheng is with Key Laboratory of Systems and Control, Academy of Mathematics and Systems Sciences, Chinese Academy of Sciences,
	Beijing 100190; and Research Center of Semi-tensor Product of Matrices-Theory and Applications, Liaocheng, P. R. China (e-mail: dcheng@iss.ac.cn).}
}
\maketitle

\begin{abstract} The rings of non-square matrices based on dimension-keeping (DK-) semi-tensor product (STP)  are considered.  Using the ring structure to hypermatrices, four kinds of eigenvalues/eigenvectors (EEs), namely, ordinary EE(OEE), universal EE(UEE),
 diagonal EE(DEE), horizontal DEE(HDEE)
 of hypermatrices are proposed with respect to preassigned special matricizings of hypermatrices respectively. Kronecker canonical form (KCF) of non-square pencils is used to calculate OEE. Then the monic decomposition algorithm (MDA) is used to pick out UEE, DEE, and MEE respectively.
 Finally, the KCF of non-square pencils is used to construct the KCF of a tensor, which reveals all the EEs of the tensor. KCF of tensors not only has similar properties of the Jordan canonical form of matrices but also includes Jordan canonical form as its special case.
 All the EEs of a hypermatrix are straightforward computable via its KCF.

\end{abstract}

\begin{IEEEkeywords}
Matrix pencil, OEE (UEE, DEE, MEE), KCF, DK-STP, hypermatrix, tensor.
\end{IEEEkeywords}

\IEEEpeerreviewmaketitle

\section{Introduction}

Given a square matrix $A\in {\cal M}_{n\times n}$. The eigenvector $0\neq x\in \C^n$ is called an eigenvector of $A$ with respect to eigenvalue $\l\in \C$, if
\begin{align}\label{1.1}
Ax=\lambda x.
\end{align}
If there exist a set of non-zero vectors $x_1,~x_2,~\cdots,x_r$ such that
\begin{align}\label{1.2}
\begin{array}{l}
Ax_1=\l x_1,\\
Ax_2=x_1+\l x_2,\\
Ax_3=x_2+\l x_3,\\
\vdots,\\
Ax_r=x_{r-1}+\l x_r,
\end{array}
\end{align}
then $\{x_1,\cdots,x_r\}$ is called a set of chaired (general) eigenvectors with respect to eigenvalue $\l$. Precisely speaking,
$x_1$ is an eigenvector and $\{x_2,\cdots,x_r\}$ are called root vectors (or general eigenvectors).

The eigenvalues and eigenvectors of matrices  play an important role in matrix theory. They essentially represent  the invariant subspaces of a square matrix.

A square matrix
$$
J_r(\l)=\begin{bmatrix}
\l&1&~&~&~\\
~&\l&1&~&~\\
~&~&~&\ddots&~\\
~&~&~&\ddots&1\\
~&~&~&~&\l\\
\end{bmatrix}\in {\cal M}_{r\times r}
$$
is called a Jordan block of size $r$.

It is well known \cite{hor85} that for a given matrix $A\in {\cal M}_{n\times n}$, there exists a non-singular matrix
$P$ such that
\begin{align}\label{1.3}
P^{-1}AP=J,
\end{align}
where
$$
J=\diag\{J_{n_1}(\l_1),\cdots,J_{n_s}(\l_s)),
$$
($n_1+\cdots+n_s=n$). $J$ is called the Jordan canonical form of $A$.

Splitting $P$ into $s$ blocks as
$$
P=[P^1,P^2,\cdots,P^s],
$$
where $P_k\in {\cal M}_{n_k\times n}$, $k\in [1,s]$, then it is easy to see that
\begin{align}\label{1.4}
\{P^k_1,\cdots,P^k_{n_k}\}
\end{align}
is a set of chained eigenvectors with respect to $\l_k$, $k\in [1,s]$. Hence, $V_k=\Span\{P^k_1,\cdots,P^k_{n_k}\}$, $k\in [1,s]$ are $A$-invariant subspaces of $\C^n$.

From the above survey it is clear that the Jordan canonical form $J$ with its transfer matrix $P$ reveal all information about the eigenvalues and eigenvectors of a matrix.

The general eigenvalue and eigenvector of a matrix $A\in {\cal M}_{m\times n}$ with respect to a type  $B\in {\cal M}_{m\times n}$ are defined by $\l\in \C$ and $0\neq x\in \C^n$ such that
\begin{align}\label{1.5}
(A-\lambda B)x=0,
\end{align}
where both $m=n$ and $m\neq n$ are allowed. We refer to \cite{van75} and the references therein for detailed discussions.

Then certain subset of eigenvectors can represent subspaces with various different behaviors, say, in addition to invariant subspace, it could be a perpendicular subspace to the original eigen-space, etc. \cite{che25}

The fundamental idea of this paper is to find all EEs of a hypermatrix by constructing its canonical form.

In most AI related literature, both hypermatrix and tensor are used to describe higher order data. According to \cite{lim13} a hypermatrix is a mapping $\pi:[1,n_1]\times\cdots\times [1,n_d]\ra \F$, where $\F$ is a field (to simplify the argument, this paper assumes $\F=\R$ or $\F=\C$ only). Roughly speaking, a tensor of covariant order $r$ and contra-variant $s$ over $n$-dimensional vector space $V$,  denoted by $t^r_s(V)$, is a multi-linear mapping \cite{boo86}. Under a fixed basis of $V$, $t^r_s(V)$ is determined by a set of structure constants
$$
\{\mu^{i_1,\cdots,i_r}_{j_1,\cdots,j_s}\;|\;  i_p,j_q\in [1,n],~p\in [1,r],~q\in [1,s]\},
$$
which can be considered as a hypermatrix $\pi:[1,n]^{r+s}\ra \F$. Based on this observation, this paper calls the  pypermatrices with
$n_1=\cdots=n_d$ tensors. Since a hypermatrix is commonly expressed as a non-square matrix, so the EEs of hypermatrices are considered as general EE of (mostly non-square) matrices. As for the canonical form, it should be considered for tensors.

In 2004 the American Institute of Mathematics (AIM)  organized a workshop on tensor decompositions. The workshop discussion notes \cite{mar04} rise a serious of open problems, summarized as a general open topic:
``How to extend algebraic structure of linear algebra to tensors?"

After the workshop, there arose an upsurge of research on tensors. This paper is particularly interested in the EEs of hypermatrices, which is also related to the Kronecker product decomposition (KPD).

 In 2005, Qi and Lim have proposed EEs of tensors independently \cite{qi05,lim05}. Since then many various definitions, properties, and applications have emerged. KPD of tensor has also been studied widely \cite{bat17}.
One more related topic is the singular value decomposition (SVD) of tensors \cite{lat00}. Tensor based technique has been received wide applications in signal and image processing\cite{kil13}.

Nowadays, along with the emergence of large-scale AI foundation models, tensors  have emerged as a prominent research subject. 
It is claimed online that all mainstream artificial intelligence essentially accomplish one single task: tensor computation. It is tensor computation that endows machines with the appearance of intelligence, \cite{xiu26}.
Recently, it was claimed by \cite{kle26} that ``In a transformer, the tensor decomposition is used to unify all attention mechanism, and whole computation of  multi-head attentions can be expressed as the module product of two tensors."
In one word, tensor, as well as hypermatrix, is an important tool in large-scale AI systems.

The purpose of this paper is threefold. (1) provide 4 kinds  of EEs for hypermatrices; (2) propose a Jordan-like canonical form, called the KCF of tensors; (3) reveal the EEs of tensor and its KCF, which is similar to the relationship between EEs and the Jordan canonical forms of matrices.

Since the KCF of a non-square pencil is straightforward computable, all the EEs of a hypermatrix are also easily computable.
To our best knowledge, before \cite{che25} only the simplest case (called the HDEE in the sequel) has been considered. In addition, to solve the EE problem of hypermatrices, a set of polynomial equations with multiple unknowns needs to be solved. In general, solving it is extremely difficult even if using numerical approximation. The method developed in this paper involves only the calculation of KCF, which requires only some elementary row and column operations and a Jordan form transformation of a square sub-matrix. It unbelievably simplified the calculation of EEs for hypermatrices.

The rest of this paper is organized as follows:
Section 2 reviews the DK-STP and considers the ring of the set of $m\times n$ matrices. Virtual identity is introduced to make the ring unitary (i.e., with product identity). Section 3 investigates some fundamental problems for hypermatrices: (a) the matricizing and vectorizing of hypermatrices; (b) EEs of hypermatrices; (c) monic decomposition algorithm for hypervectors. The KCF of matrix pencil and its relationship with general EE of matrices are discussed in Section 4. Section 5 extends the KCF of matrix pencil to hypermatrices and proposes a method to calculate EEs of a hypermatrix: first using KCF to find OEEs and the using MDA to pick out UEEs, DEEs, and HDEEs. Finally, the KCF of tensors is obtained.
Section 6 is a brief conclusion.

Before ending this section a list of notations is presented.

\begin{itemize}

\item $\R$: set of real numbers.

\item $\C$: set of complex numbers.

\item $\Z_+$: set of positive integers.

\item $\lcm(a,b)$ (or $a\vee b$): least common multiple of $a$ and $b$.

\item $gcd(a,b)$ (or $a\wedge b$): greatest common divisor of $a$ and $b$.



\item ${\cal M}_{m\times n}$: set of $m\times n$ dimensional real matrices.

\item$\otimes$: Kronecker product of matrices.

\item$\ltimes$: MM- STP.

\item $\ttimes$: DK-STP.

\item $I_{m\times n}$: virtual identity in $\ttimes$.

\item $\Pi$: P-STP.

\item $I^P_{m\times n}$: virtual identity in $\Pi$.

\item $\F^{\infty}:=\bigcup_{n=1}^{\infty}\F^{n}$, where $\F$ can be $\R$ or $\C$.








\item $\Col(M)$: set of columns of $M$.

\item $\Col_i(M)$: $i$-th column of $M$.

\item $\d_n^i$: $\d_n^i=\Col_i(I_n)$.


\end{itemize}

\vskip 2mm

\section{STP and Ring of Non-square Matrices}

This section is partly based on \cite{che19,che19b,che26}.

\subsection{Semi-Tensor Products}

\begin{dfn}\label{d2.1.1} Given $A\in {\cal M}_{m\times n}$, $B\in {\cal M}_{p\times q}$, and denote $t=n\vee p$.
\begin{itemize}
\item[(i)] The matrix-matrix (MM-) STP is defined by
\begin{align}\label{2.1.1}
A\ltimes B:=(A\otimes I_{t/n})(B\otimes I_{t/p})\in {\cal M}_{(mt/n)\times (qt/p)}.
\end{align}
\item[(ii)] The DK- STP is defined by
\begin{align}\label{2.1.2}
A\ttimes B:=(A\otimes \J^{\T}_{t/n})(B\otimes \J_{t/p})\in {\cal M}_{m\times q}.
\end{align}
\item[(iii)] The project-based (P-) STP is defined by
\begin{align}\label{2.1.3}
A\Pi B:=\frac{n}{t}(A\otimes \J^{\T}_{t/n})(B\otimes \J_{t/p})\in {\cal M}_{m\times q}.
\end{align}
\end{itemize}
\end{dfn}

\begin{prp}\label{p2.1.2}
\begin{itemize}
\item[(i)] Consider DK-STP (\ref{2.1.2}).
\begin{align}\label{2.1.4}
A\ttimes B=A\Psi_{n\times p}B,
\end{align}
where
$$
\Psi_{n\times p}=(I_n\otimes \J^{\T}_{t/n})(I_p\otimes \J_{t/p})
$$
is called the bridge matrix of DK-STP.

\item[(ii)] Consider P-STP (\ref{2.1.3}).
\begin{align}\label{2.1.5}
A\Pi B=A\Psi^P_{n\times p}B,
\end{align}
where
$$
\Psi^P_{n\times p}=\frac{n}{t}(I_n\otimes \J^{\T}_{t/n})(I_p\otimes \J_{t/p})
$$
is called the bridge matrix of P-STP.
\end{itemize}
\end{prp}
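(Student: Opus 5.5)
The plan is to use the mixed-product property of the Kronecker product to pull $A$ and $B$ out of the two factors in Definition \ref{d2.1.1}(ii). Part (ii) then follows at once, since $\Pi$ differs from $\ttimes$ only by the scalar $n/t$.

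For (i), fix $A\in{\cal M}_{m\times n}$ and $B\in{\cal M}_{p\times q}$, and let $t=n\vee p$.

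First, write $A\otimes \J^{\T}_{t/n}$ as a product with $A$ on the left. Since $A=A I_n$ and $\J^{\T}_{t/n}=1\cdot \J^{\T}_{t/n}$, where $1$ is the $1\times 1$ identity, the mixed-product rule $(X\otimes Y)(Z\otimes W)=XZ\otimes YW$ gives
\begin{align*}
A\otimes \J^{\T}_{t/n}=(A\otimes 1)(I_n\otimes \J^{\T}_{t/n})=A(I_n\otimes \J^{\T}_{t/n}).
\end{align*}
The dimensions are compatible: $A\otimes 1$ is $m\times n$, and $I_n\otimes \J^{\T}_{t/n}$ is $n\times t$.

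Second, do the same on the right for the other factor:
\begin{align*}
B\otimes \J_{t/p}=(I_p\otimes \J_{t/p})(B\otimes 1)=(I_p\otimes \J_{t/p})B.
\end{align*}
Here $I_p\otimes \J_{t/p}$ is $t\times p$ and $B$ is $p\times q$, so the product is defined and the result is $t\times q$.

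Multiplying the two identities and using associativity,
\begin{align*}
A\ttimes B=A\,(I_n\otimes \J^{\T}_{t/n})(I_p\otimes \J_{t/p})\,B=A\Psi_{n\times p}B.
\end{align*}
This is exactly (\ref{2.1.4}). Note that $\Psi_{n\times p}$ is $n\times p$ and depends only on $n$ and $p$.

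For (ii), multiply the identity from (i) by the scalar $n/t$. Scalars commute with matrix products, so $A\Pi B=\frac{n}{t}A\Psi_{n\times p}B=A\Psi^P_{n\times p}B$, which is (\ref{2.1.5}).

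No step is a real obstacle. The only thing to check is that the dimensions match when the mixed-product rule is applied, in particular that the trivial $1\times 1$ factor is placed on the correct side in each of the two factorizations.
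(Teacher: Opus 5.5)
Your proof is correct: the two mixed-product factorizations $A\otimes \J^{\T}_{t/n}=A(I_n\otimes \J^{\T}_{t/n})$ and $B\otimes \J_{t/p}=(I_p\otimes \J_{t/p})B$ are exactly what is needed, and (ii) then follows by pulling out the scalar $n/t$. The paper gives no proof of this proposition, since it is quoted from its STP references, and your derivation is the standard argument for it.
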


The followings are some fundamental properties of STP.

\begin{prp}\label{p2.1.3} Let $*\in \{\ltimes,\ttimes,\Pi\}$ be an STP.
\begin{itemize}
\item[(i)]
If $n=p$, then
\begin{align}\label{2.1.8}
A*B=AB.
\end{align}
That is, an STP is a generalization of classical matrix product.

\item[(ii)] (Associativity)

\begin{align}\label{2.1.9}
(A*B)*C=A*(B*C*).
\end{align}

\item[(iii)] (Distributivity) Assume $A$ and $B$ have the same size. Then

\begin{align}\label{2.1.11}
\begin{array}{l}
(A+B)*C=A*C+B*C;\\
C*(A+B)=C*A+C*B.
\end{array}
\end{align}

\end{itemize}
\end{prp}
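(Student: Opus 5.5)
We treat the three parts of Proposition \ref{p2.1.3} separately. For each product $*$ we work either from Definition \ref{d2.1.1} or from the bridge-matrix form of Proposition \ref{p2.1.2}. Throughout, $\J_k$ is the all-ones column vector, so $\J_1=1$. For the MM-STP, the corresponding factor is $I_k$ with $I_1=1$.

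\emph{Part (i).} If $n=p$ then $t=n\vee p=n$, so $t/n=t/p=1$. Hence $\J_{t/n}=\J_{t/p}=1$ and $I_{t/n}=I_{t/p}=1$, and the Kronecker factors in (\ref{2.1.1})--(\ref{2.1.3}) disappear. The coefficient $n/t$ in the P-STP also equals $1$. Therefore $A*B=AB$ for all three products.

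\emph{Part (iii).} Distributivity follows because every product is linear in each argument. By Proposition \ref{p2.1.2}, $A\ttimes B=A\Psi_{n\times p}B$ and $A\Pi B=A\Psi^P_{n\times p}B$. The bridge matrix depends only on the sizes of the factors, and $A$ and $B$ have the same size, so the same bridge matrix is used in every term. Ordinary matrix distributivity then gives
\[
(A+B)\Psi C=A\Psi C+B\Psi C,\qquad C\Psi(A+B)=C\Psi A+C\Psi B .
\]
For the MM-STP the same argument works using $(A+B)\otimes I_k=A\otimes I_k+B\otimes I_k$.

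\emph{Part (ii), associativity.} This is the main step.

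For $\ltimes$, we would use the standard argument. Let $A\in{\cal M}_{m\times n}$, $B\in{\cal M}_{p\times q}$ and $C\in{\cal M}_{r\times s}$. First enlarge to a common dimension: choose $T$ a common multiple of all the inner dimensions involved. By the mixed-product rule $(X\otimes I_a)(Y\otimes I_a)=(XY)\otimes I_a$, and since $X\otimes I_{ab}=(X\otimes I_a)\otimes I_b$, both sides of the identity become the same product of $A\otimes I_\alpha$, $B\otimes I_\beta$ and $C\otimes I_\gamma$ for suitable $\alpha,\beta,\gamma$. One also checks that padding an MM-STP with an extra $I_k$ does not change it, apart from tensoring the result by $I_k$.

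For $\ttimes$ and $\Pi$, write both sides with bridge matrices. Since $A\ttimes B\in{\cal M}_{m\times q}$, we get
\[
(A\ttimes B)\ttimes C=A\Psi_{n\times p}B\Psi_{q\times r}C,
\]
and, since $B\ttimes C\in{\cal M}_{p\times s}$,
\[
A\ttimes(B\ttimes C)=A\Psi_{n\times p}B\Psi_{q\times r}C .
\]
The two expressions are literally equal. The key observation is that the DK-STP keeps the row dimension of the left factor and the column dimension of the right factor, so the inner sizes seen by each bridge matrix are the same whichever way the product is bracketed. The P-STP argument is identical with $\Psi^P$ in place of $\Psi$.

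We expect the real difficulty to lie only in the MM-STP bookkeeping, namely checking that the padded identity sizes agree on both sides. We would handle this by taking $T=\lcm(n,p,q,r)$-type common multiples and repeatedly applying the mixed-product rule.
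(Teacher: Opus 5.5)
The paper does not prove Proposition~\ref{p2.1.3}; it is imported from the STP literature cited at the start of Section 2, so your proposal has to stand on its own. Parts (i) and (iii) are correct. For $\ttimes$ and $\Pi$ your associativity argument is complete and is the natural one. Given the bridge-matrix form of Proposition~\ref{p2.1.2}, which is just the mixed-product identities $A\otimes\J^{\T}_{t/n}=A(I_n\otimes\J^{\T}_{t/n})$ and $B\otimes\J_{t/p}=(I_p\otimes\J_{t/p})B$, both bracketings are literally $A\Psi_{n\times p}B\Psi_{q\times r}C$. This holds because these products preserve the outer dimensions.

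The gap is the MM-STP case of (ii), which is the only nontrivial claim in the proposition. There you assert, rather than prove, that both sides become the same product of $A\otimes I_\alpha$, $B\otimes I_\beta$, $C\otimes I_\gamma$. Your suggested normalization does not deliver this, for two reasons:
\begin{itemize}
\item[(a)] A common multiple $T$ of $n,p,q,r$ need not even make the factors conformable. For $n=2$, $p=3$, $q=4$, $r=6$ and $T=12$, the factor $B\otimes I_{T/p}$ has $16$ columns, which is not a multiple of $r=6$.
\item[(b)] Comparing both sides with a common over-padded product, via your padding lemma, only yields $\bigl((A\ltimes B)\ltimes C\bigr)\otimes I_{k_1}=\bigl(A\ltimes(B\ltimes C)\bigr)\otimes I_{k_2}$. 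Nothing can be cancelled from this unless you already know $k_1=k_2$, i.e.\ that the two sides have the same size, and that is part of what must be proved.
\end{itemize}

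What is missing is an exact computation of the padding each bracketing actually uses. Put $t_1=n\vee p$ and $t_3=q\vee r$. Unfolding with your padding lemma gives $(A\ltimes B)\ltimes C=(A\otimes I_\alpha)(B\otimes I_\beta)(C\otimes I_\gamma)$ with $\beta=\bigl((qt_1/p)\vee r\bigr)/q$. Likewise $A\ltimes(B\ltimes C)=(A\otimes I_{\alpha'})(B\otimes I_{\beta'})(C\otimes I_{\gamma'})$ with $\beta'=\bigl(n\vee(pt_3/q)\bigr)/p$. In both cases $n\alpha=p\beta$ and $q\beta=r\gamma$ (and the same with primes). Since $t_1/p=n/\gcd(n,p)$ and $t_3/q=r/\gcd(q,r)$, each of $\beta,\beta'$ is the least positive integer $b$ with $n\mid pb$ and $r\mid qb$. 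Hence $\beta=\beta'=\lcm\bigl(n/\gcd(n,p),\,r/\gcd(q,r)\bigr)$, and then $\alpha=\alpha'$ and $\gamma=\gamma'$ are forced. Adding this short number-theoretic lemma turns your outline into a complete proof.
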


Denote
$$
{\cal M}:=\bigcup_{m=1}^{\infty}\bigcup_{n=1}^{\infty} {\cal M}_{m\times n}.
$$
\begin{cor}\label{c2.1.4}
$({\cal M},*)$ are semi-groups.
\end{cor}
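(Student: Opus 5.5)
Corollary \ref{c2.1.4} asserts that $({\cal M},*)$ is a semigroup for each $*\in\{\ltimes,\ttimes,\Pi\}$. This needs exactly two things: $*$ is a well-defined binary operation on ${\cal M}$, and it is associative. The plan is to read closure directly off Definition \ref{d2.1.1} and to take associativity from Proposition \ref{p2.1.3}(ii).

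First, closure. For $A\in{\cal M}_{m\times n}$ and $B\in{\cal M}_{p\times q}$, put $t=n\vee p$; both $t/n$ and $t/p$ are positive integers.
\begin{itemize}
\item For $\ltimes$, the factors $A\otimes I_{t/n}\in{\cal M}_{(mt/n)\times t}$ and $B\otimes I_{t/p}\in{\cal M}_{t\times(qt/p)}$ are conformable. Their product lies in ${\cal M}_{(mt/n)\times(qt/p)}\subset{\cal M}$.
\item For $\ttimes$, the factors $A\otimes\J^{\T}_{t/n}\in{\cal M}_{m\times t}$ and $B\otimes\J_{t/p}\in{\cal M}_{t\times q}$ are conformable. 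Their product lies in ${\cal M}_{m\times q}$.
\item $\Pi$ is the same product as $\ttimes$ multiplied by the scalar $n/t$, so it also lands in ${\cal M}_{m\times q}$.
\end{itemize}
Hence each $*$ maps ${\cal M}\times{\cal M}$ into ${\cal M}$, with no dimension restriction on the operands. This is precisely why the union over all $m,n$ is used rather than a single ${\cal M}_{m\times n}$.

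Second, associativity holds by Proposition \ref{p2.1.3}(ii), which gives $(A*B)*C=A*(B*C)$ for all $A,B,C\in{\cal M}$. The stray $*$ in the displayed formula (\ref{2.1.9}) is a typo. Closure plus associativity is the definition of a semigroup, which finishes the proof.

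The only step with real content is associativity, and the paper has already assumed it. If it had to be checked directly, I would use the bridge-matrix forms of Proposition \ref{p2.1.2}: $A\ttimes B=A\Psi_{n\times p}B$ and $A\Pi B=A\Psi^P_{n\times p}B$. With $C\in{\cal M}_{r\times s}$, one has $(A\ttimes B)\ttimes C=A\Psi_{n\times p}B\Psi_{q\times r}C$, since $A\ttimes B$ has $q$ columns. Likewise $A\ttimes(B\ttimes C)=A\Psi_{n\times p}B\Psi_{q\times r}C$, since $B\ttimes C$ has $p$ rows. The two sides agree identically. The same computation works for $\Pi$. For $\ltimes$, the sizes of the intermediate products change, and I would instead invoke the standard lcm/Kronecker identity $(X\otimes I_a)\otimes I_b=X\otimes I_{ab}$ to bring all three factors to a common dimension.
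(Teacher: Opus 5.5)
Your proposal is correct and follows the paper's route. The paper states this corollary as an immediate consequence of Proposition \ref{p2.1.3}: closure comes from Definition \ref{d2.1.1} and associativity from part (ii), exactly as you argue, and you are right that the trailing $*$ in (\ref{2.1.9}) is a typo. Your optional bridge-matrix check is valid for $\ttimes$ and $\Pi$. For $\ltimes$ it would also need the mixed-product identity $(A\otimes I_{kt/n})(B\otimes I_{kt/p})=(A\ltimes B)\otimes I_k$, but the corollary does not depend on that check.
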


\subsection{Ring of Matrices}

To begin with, we consider the set of matrices with fixed dimension. It is easy to verify the following result.

\begin{prp}\label{p3.1.1} Consider
$$
R_{m\times n}:=({\cal M}_{m\times n},*,+),
$$
where $*\in\{\ttimes,\Pi\}$, $+$ is the conventional matrix addition. Then $R_{m\times n}$ is a ring.
\end{prp}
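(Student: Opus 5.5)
Closure comes first. For $A,B\in{\cal M}_{m\times n}$ and $*\in\{\ttimes,\Pi\}$, Proposition \ref{p2.1.2} gives $A*B=A\Psi B$, where $\Psi$ is $\Psi_{n\times m}$ or $\Psi^P_{n\times m}$. Take $t=n\vee m$. Then $\Psi=(I_n\otimes \J^{\T}_{t/n})(I_m\otimes \J_{t/m})$, up to the scalar $n/t$ in the P-STP case. The first factor is $n\times t$ and the second is $t\times m$, so $\Psi\in{\cal M}_{n\times m}$. Hence $A\Psi B\in{\cal M}_{m\times n}$, and the product is well defined on ${\cal M}_{m\times n}$. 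This is exactly the dimension-keeping property: it already appears in Definition \ref{d2.1.1}, where $A*B\in{\cal M}_{m\times q}$ with $q=n$.

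The additive structure needs no new work. $({\cal M}_{m\times n},+)$ is the usual abelian group of $m\times n$ matrices, with the zero matrix as identity and $-A$ as inverse.

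Associativity has two routes. The direct one is
$$
(A*B)*C=(A\Psi B)\Psi C=A\Psi(B\Psi C)=A*(B*C),
$$
which follows from ordinary matrix associativity because the same bridge matrix $\Psi$ is used every time, all factors being in ${\cal M}_{m\times n}$. Alternatively, one can cite Proposition \ref{p2.1.3}(ii).

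Distributivity works the same way. From $A\Psi(B+C)=A\Psi B+A\Psi C$ and $(A+B)\Psi C=A\Psi C+B\Psi C$, which is also Proposition \ref{p2.1.3}(iii), both distributive laws hold.

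No step is a real obstacle. The only point needing care is closure, that is, checking that the bridge matrix has size exactly $n\times m$ so that the product remains $m\times n$.

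Unitality is not part of the claim. The paper later adds a virtual identity precisely because $R_{m\times n}$ need not have a multiplicative identity when $m\ne n$. So the statement should be read as a ring that is not necessarily unital, and the proof will not attempt to produce an identity element.
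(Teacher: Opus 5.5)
Your proposal is correct and follows the same route the paper implies when it says the result is ``easy to verify'': closure is the dimension-keeping property (the bridge matrix $\Psi_{n\times m}$ is $n\times m$), and associativity and distributivity come from Proposition \ref{p2.1.3}, or directly from ordinary matrix algebra with a fixed $\Psi$, as you note. Your reading of ``ring'' as not necessarily unital also matches the paper, which adjoins the virtual identity $I_{m\times n}$ for exactly that reason.
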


Note that for statement ease, hereafter we consider $\ttimes$ only. (In fact,  all the arguments in the sequel for $\ttimes$ are also true for $\Pi$, and $\Pi$ is particularly important in dimension-varying dynamic control systems.)

\begin{dfn}\label{d3.2.1} Consider ${\cal M}_{m\times n}$.
\begin{itemize}
\item[(i)]
A virtual identity $I_{m\times n}$ is defined to satisfy
\begin{align}\label{3.2.1}
\begin{array}{l}
I_{m\times n}\ttimes I_{m\times n}=I_{m\times n},\\
I_{m\times n}\ttimes A=A\ttimes I_{m\times n}=A,\quad \forall A\in {\cal M}_{m\times n},\\
\end{array}
\end{align}
\item[(ii)]
$$
\overline{{\cal M}}_{m\times n}:=\{aI_{m\times n}+A_0\;|\; a\in \R,~ A_0\in {\cal M}_{m\times n}\},
$$
with
\begin{align}\label{3.2.2}
\begin{array}{l}
(aI_{m\times n}+A_0)+(bI_{m\times n}+B_0)\\
~=(a+b)I_{m\times n}+(A_0+B_0),\\
(aI_{m\times n}+A_0)\ttimes (bI_{m\times n}+B_0)\\
~=(ab)I_{m\times n}+aB_0+bA_0+A_0\ttimes B_0\\
\end{array}
\end{align}
\end{itemize}
\end{dfn}

Then it is easy to verify the following.

\begin{prp}\label{p3.2.2}
$$
\bar{R}_{m\times n}:=\{\overline{{\cal M}}_{m\times n},\ttimes,+\}
$$
is a unitary ring, where $\ttimes$ and $+$ are defined by (\ref{3.2.2}).
\end{prp}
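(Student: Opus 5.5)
The plan is to treat $\overline{{\cal M}}_{m\times n}$ as the standard unitization of the (possibly non-unital) ring $R_{m\times n}$ of Proposition \ref{p3.1.1}. Concretely, I identify $aI_{m\times n}+A_0$ with the formal pair $(a,A_0)\in \R\times {\cal M}_{m\times n}$. The operations (\ref{3.2.2}) then read
\begin{align*}
(a,A_0)+(b,B_0)&=(a+b,A_0+B_0),\\
(a,A_0)\ttimes(b,B_0)&=(ab,\;aB_0+bA_0+A_0\ttimes B_0).
\end{align*}
This is the Dorroh extension construction. Working with pairs avoids any question of what the ``virtual'' symbol $I_{m\times n}$ actually is; the defining relations (\ref{3.2.1}) are built into the multiplication rule with $I_{m\times n}=(1,0)$.

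The verification goes in the following order.

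First, $(\overline{{\cal M}}_{m\times n},+)$ is an abelian group. This holds because it is the direct sum of the groups $\R$ and ${\cal M}_{m\times n}$, with zero $(0,0)$ and negatives $(-a,-A_0)$.

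Second, distributivity. I expand $(a,A_0)\ttimes\big((b,B_0)+(c,C_0)\big)$ using the bilinearity of ordinary scalar multiplication and the distributivity of $\ttimes$ on ${\cal M}_{m\times n}$ (Proposition \ref{p2.1.3}(iii)). This gives the sum of the two products, and right distributivity is symmetric.

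Third, the identity. I check directly that $(1,0)\ttimes(b,B_0)=(b,B_0)=(b,B_0)\ttimes(1,0)$. Together with $(1,0)\ttimes(1,0)=(1,0)$, this recovers (\ref{3.2.1}).

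The main step, though still routine, is associativity. I expand both $\big((a,A)\ttimes(b,B)\big)\ttimes(c,C)$ and $(a,A)\ttimes\big((b,B)\ttimes(c,C)\big)$. Each has scalar part $abc$ and matrix part
$$
abC+acB+bcA+a\,B\ttimes C+b\,A\ttimes C+c\,A\ttimes B+A\ttimes B\ttimes C,
$$
where the last term is unambiguous by Proposition \ref{p2.1.3}(ii). To match the mixed terms I need scalar compatibility,
$$
(\lambda A)\ttimes B=A\ttimes(\lambda B)=\lambda(A\ttimes B).
$$
This is immediate from the bridge-matrix formula of Proposition \ref{p2.1.2}(i), since $A\ttimes B=A\Psi_{n\times m}B$ is ordinary matrix multiplication and hence bilinear.

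The only point needing care is where $\ttimes$ lands. For $A,B\in{\cal M}_{m\times n}$, $A\ttimes B\in{\cal M}_{m\times n}$, because DK-STP keeps the row count of the left factor and the column count of the right factor (Definition \ref{d2.1.1}(ii)). So all terms above stay inside ${\cal M}_{m\times n}$ and the product is closed. This closure is exactly what Proposition \ref{p3.1.1} already provides, so nothing beyond the listed properties is required, and the ring axioms follow.
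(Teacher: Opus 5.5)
Your proposal is correct and is essentially the routine axiom check the paper leaves as ``easy to verify'': it treats $\overline{{\cal M}}_{m\times n}$ as the Dorroh unitization of the ring $R_{m\times n}$ of Proposition~\ref{p3.1.1} (which has no genuine identity when $m\neq n$), and uses the bridge-matrix form $A\ttimes B=A\Psi_{n\times m}B$ to get bilinearity and associativity. The only nuance is the square case $m=n$: there the paper identifies $I_{n\times n}$ with the genuine identity $I_n$ (Remark~\ref{r6.2.2}(ii)), so $\overline{{\cal M}}_{n\times n}={\cal M}_{n\times n}$, whereas your formal pairs give a larger ring; the conclusion that the ring is unital holds under either reading.
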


Next, we consider the action of $\bar{R}_{m\times n}$ on cross-dimensional Euclidian space
$$
\F^{\infty}=\bigcup_{n=1}^{\infty}\F^n.
$$ 
We refer to \cite{che26} for the topological and vector structures of $\F^{\infty}$ and simply introduce the action of $\bar{R}$ on $\F^{\infty}$. 

\begin{dfn}\label{p3.2.3} Let $x\in \F^p\subset \F^{\infty}$. Define $\pi: \overline{{\cal M}}_{m\times n}\times \F^{\infty}\ra \F^{\infty}$ by
\begin{align}\label{3.2.3}
\begin{cases}
\pi(A, x):=A\Pi (\Psi^P_{n\times p}x),\\
\pi(I_{m\times n},x):=\Psi^P_{n\times p}x.
\end{cases}
\end{align}
\end{dfn}

\begin{rem}\label{r3.2.4} 
\begin{itemize}
\item[(i)] In fact, $\Psi^P_{n\times p}$ is the project matrix for a projection from $\R^p$ to $\R^n$ \cite{che26}.
\item[(ii)] It is not difficult to verify that Definition (\ref{3.2.3}) makes $(\bar{R}_{m\times n},\F^{\infty},\pi)$ a module, which provides a framework for cross-dimensional dynamic systems.
\end{itemize}
\end{rem}

\section{Hypermatrices}

\subsection{Matricizing and Vectorizing  of Hypermatrices}

The definition of hypermatrix in \cite{lim13} is a mapping from multiple indices to a field as: $\pi: [1,n_1]\times \cdots\times [1,n_d]\ra \F$. It can equivalently be stated in the set sense as follows:

\begin{dfn}\label{d4.1.1}

\begin{itemize}
\item[(i)]
\begin{align}\label{4.1.1}
\begin{array}{l}
{\cal A}=\left\{a_{i_1,\cdots,i_d}\in \F\;|\; i_s\in[1,n_s];~s\in [1,d]\right\},\\
~~\F =\R(or \C),
\end{array}
\end{align}
is a hypermatrix of order $d$ and dimension ${\bf n}=n_1\times\cdots n_d$.

The set of hypermatrices of  dimension ${\bf n}:=n_1\times\cdots n_d$ is denoted by
$$
\F^{n_1\times \cdots\times n_d}.
$$

\item[(ii)] A hypermatrix is called a tensor if $n_1=\cdots=n_d$.
\end{itemize}
\end{dfn}

Note that in this paper a tensor is a special hypermatrix. It is also called a cubical hypermatrix \cite{lim13}.

\begin{dfn}\label{d4.1.2}
\begin{itemize}
\item[(i)] Let ${\cal A}=(a_{i_1,\cdots,i_d})\in \F^{n_1\times \cdots\times n_d}$. The vectorizing of ${\cal A}$ is defined by
\begin{align}\label{4.1.2}
V_{\cal A}=(a_{1,\cdots,1},a_{1,\cdots,2},\cdots, a_{n_1,\cdots,n_d})^{\T}.
\end{align}
\item[(ii)] Let $\vec{i}=(i_1,\cdots,i_d)$, $\vec{j}=(j_1,\cdots,j_s)$, and $\vec{k}=(k_1,\cdots,k_r)$ be three sets of indices, and
$$
\vec{i}=\vec{j}\bigcup \vec{k}
$$
be a partition.
Denote the matricizing of ${\cal A}$ as
$$
A=M^{\vec{j}\times \vec{k}}({\cal A}),
$$
which consists of the entries of ${\cal A}$ arranged in such a way that its rows are labeled by $\vec{j}$ and its columns are labeled by $\vec{k}$.
\end{itemize}
\end{dfn}

\begin{exa}\label{e4.1.3} Assume ${\cal A}=(a_{i,j,k})\in \R^{2\times 3\times 2}$. Then
\begin{itemize}
\item[(i)]
$$
\begin{array}{l}
V_{{\cal A}}=(a_{1,1,1},a_{1,1,2},a_{1,2,1},a_{1,2,2},a_{1,3,1},a_{1,3,2},\\
~~~a_{2,1,1},a_{2,1,2},a_{2,2,1},a_{2,2,2},a_{2,3,1},a_{2,3,2})^{\T}.\\
\end{array}
$$
\item[(ii)] Let $\vec{s}=(i,k)$, $\vec{r}=(j)$. Then
$$
\begin{array}{l}
M^{\vec{s}\times \vec{r}}({\cal A})=\\
\begin{bmatrix}
a_{1,1,1}&a_{1,2,1}&a_{1,3,1}\\
a_{1,1,2}&a_{1,2,2}&a_{1,3,2}\\
a_{2,1,1}&a_{2,2,1}&a_{2,3,1}\\
a_{2,1,2}&a_{2,2,2}&a_{2,3,2}\\
\end{bmatrix}.
\end{array}
$$
\end{itemize}
\end{exa}

\subsection{Various EEs}

Theoretically, a tensor of order $d$ has
$$
N=\dsum_{k=0}^d \binom{d}{k}k!(d-k)!
$$
different matricizings. If both $\vec{j}$ and $\vec{k}$ are order preserving, there are
$$
N_0=2^k
$$
order preserving matricizings. $\vec{j}$ is order preserving if $j_k=i_{u_k}$, $k\in [1,s]$, where $s=|\vec{j}|$, then $u_1<u_2<\cdots<u_s$.

For statement ease, hereafter only order preserving partitions are considered. In this case, as long as the partition is determined, the corresponding matricizing is also unique.

Now one sees easily that both the set of data and a matricizing of a hypermatrix can be considered as equivalent expressions of a hypermatrix. But the problem is: when the EEs of a hypermatrix is considered, what is to be concerned? the set of data or its each special matricizings? The answer is obvious. Even in $d=2$ case (i.e., matrix case), the EEs are also depending on a particular matrix form. Hence, the EEs of a hypermatrix should be   considered under each special matricizing. This perspective leads to the following definition.

\begin{dfn}\label{d4.2.1} Let ${\cal A}=(a_{i_1,\cdots,i_d})\in \F^{n_1\times \cdots\times n_d}$,
$\vec{i}=\vec{j}\cup \vec{k}$ be an (order keeping) partition, and
\begin{align}\label{4.2.1}
A=M^{\vec{j}\times \vec{k}}({\cal A}).
\end{align}
Denote $|\vec{j}|=s$, $|\vec{k}|=r$, $s+r=d$. $u=n^s$ and $v=n^r$. Then $A\in {\cal M}_{u\times v}$.

\begin{itemize}
\item[(i)] $0\neq x\in \C^v$ is an ordinal eigenvector of ${\cal A}$ with respect to partition $\vec{j}\times \vec{k}$, type $B\in {\cal M}_{u\times v}$, and eigenvalue $\l\in \C$, if
\begin{align}\label{4.2.2}
(A-\l B)x=0.
\end{align}
With its corresponding eigenvalue, it becomes an OEE.
\item[(ii)]  $0\neq x\in \C^v$  is a universal eigenvector of ${\cal A}$    with respect to partition $\vec{j}\times \vec{k}$, type
$B=(B_1,\cdots,B_s)$, $B_q\in {\cal M}_{n_{j_q}\times v}$, $q\in [1,s]$, and eigenvalue $\l\in \C$, if
 \begin{align}\label{4.2.3}
 Ax=\l \ltimes_{k=1}^sy_k,
\end{align}
where
$$
\begin{array}{l}
x=\ltimes_{p=1}^rx_p,\quad x_p\in \C^{n_{k_p}},\quad p\in[1,r]\\
y_q=B_qx,\quad q\in [1,s].
\end{array}
$$
With its corresponding eigenvalue, it becomes a UEE.
\item[(iii)] A  universal eigenvector, as defined in (ii), is a diagonal eigenvector, if $n_1=\cdots=n_d$ and
 \begin{align}\label{4.2.4}
\begin{array}{l}
x_1=x_2=\cdots=x_r,\\
y_1=y_2=\cdots=y_s.\\
\end{array}
\end{align}
With its corresponding eigenvalue, it becomes a DEE.
\item[(iv)] A  diagonal eigenvector, as defined in (iii), is a horizontal diagonal eigenvector, if $s=1$ (i.e., $r=d-1$). and
With its corresponding eigenvalue, it becomes a HDEE.
\end{itemize}
\end{dfn}

\begin{rem}\label{r4.2.2}
\begin{itemize}
\item[(i)] OEE comes from the general EE of matrices directly. Its physical meaning is obvious. In other word, its physical meaning is inherited from matrix case.

\item[(ii)] UEE was proposed by \cite{che25}. To see it is a special case of OEE, we denote
$y=Bx$ and assume both $x$ and $y$ in OEE are separable as
$$
y=\ltimes_{p=1}^sy_p,\quad x=\ltimes_{q=1}^rx_q.
$$
Then we have
$$
\ltimes_{p=1}^sy_p=B\ltimes_{q=1}^rx_q.
$$
Using MDA, we can have
$$
\begin{array}{l}
y_p=c_p\Xi^e_{p;{\bf n}} B\ltimes_{q=1}^rx_q\\
~~:=B_p\ltimes_{q=1}^rx_q,\quad p\in[1.s],
\end{array}
$$
where $\prod_{p=1}^sc_p=h(y)$, which is the head value of $y$.

Hence the UEE is a special case of OEE.

\item[(iii)] DEE are obviously special case of UEE.

\item[(iv)] HDEE is a special case of DEE, which is a non-trivial form of matricizing with maximum column number of columns.
It was claimed in \cite{che25} that to their best knowledge all existing definitions of eigenvalues/eigenvectors of hypermatrices up to \cite{che25} are of HDEE. For example, the definitions in \cite{chi13,din15,kol14,li14,qi05,qi08} are all HDEEs.

\item[(v)] One important result of this paper is to provide a general method to find all OEEs of a hypermatrix.  Since
 \begin{align}\label{4.2.5}
OEE\supset UEE\supset DEE \supset MEE,
\end{align}
the solutions of UEE, DEE, and HDEE can be picked out from the set of OEEs by verifying them, using MDA. (Refer to the next subsection.)
\end{itemize}
\end{rem}

\subsection{Monic Decomposition Algorithm}

This subsection is partly based on \cite{che25}.

\begin{dfn}\label{d4.3.1} A vector $x\in \F^n$ with $n=\prod_{i=1}^d n_i$ is called a hypervector, if there exist $x_i\in \F^{n_i}$, $i\in [1,d]$ such that
\begin{align}\label{4.3.1}
x=\ltimes_{i=1}^dx_i.
\end{align}
The $d$ is called the order of $x$ and ${\bf n}=n_1\times \cdots\times n_d$ is the dimension of $x$. All the hypervectors of
order $d$ and dimension ${\bf n}=n_1\times \cdots\times n_d$ is denoted by
$$
\F^{n_1\ltimes \cdots\ltimes n_d}.
$$
\end{dfn}

If $x$ can be expressed as in (\ref{4.3.1}), then it is said to be decomposable (with respect to ${\bf n}$).
The following result says that
up to a set of product constant coefficients the decomposition is unique.
\begin{prp}\label{p4.3.2} Assume
$$
x=\ltimes_{i=1}^dx_i=\ltimes_{i=1}^dz_i,
$$
then
$$
x_i=c_iz_i,\quad i\in [1,d],
$$
and $\prod_{i=1}^dc_i=1$.
\end{prp}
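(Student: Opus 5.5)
Throughout I assume $x\neq 0$; otherwise some factor is zero and the other factors are arbitrary, so the statement is only meaningful for nonzero $x$. In that case every $x_i$ and every $z_i$ is nonzero. Since all factors are column vectors, the STP reduces to the Kronecker product: $x=x_1\otimes x_2\otimes\cdots\otimes x_d$, and likewise for the $z_i$. The plan is to prove the case $d=2$ by a rank-one matrix argument, and then reach general $d$ by induction, grouping the trailing factors into one.

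\emph{Case $d=2$.} Let $x=x_1\otimes x_2$ with $x_1\in\F^{n_1}$ and $x_2\in\F^{n_2}$. Reshape $x$ into an $n_1\times n_2$ matrix $X$ by taking consecutive blocks of length $n_2$ as rows. This is exactly the matricizing $M^{(i_1)\times(i_2)}$ of Definition \ref{d4.1.2}. Then $X=x_1x_2^{\T}$, and also $X=z_1z_2^{\T}$.
\begin{itemize}
\item Since $X\neq 0$, it has rank one.
\item Its column space is $\Span\{x_1\}=\Span\{z_1\}$, so $x_1=c_1z_1$ for some $c_1\neq 0$.
\item Its row space gives $x_2=c_2z_2$ for some $c_2\neq 0$.
\item Substituting, $X=c_1c_2\,z_1z_2^{\T}=z_1z_2^{\T}$ with $z_1z_2^{\T}\neq 0$, hence $c_1c_2=1$.
\end{itemize}

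\emph{Induction step.} Suppose the claim holds for order $d-1$. Put $w=x_2\otimes\cdots\otimes x_d$ and $w'=z_2\otimes\cdots\otimes z_d$, both in $\F^{n_2\cdots n_d}$. Then $x=x_1\otimes w=z_1\otimes w'$.
\begin{itemize}
\item The case $d=2$ gives $x_1=c_1z_1$ and $w=c_1^{-1}w'$.
\item Absorb the scalar into one factor: $w=(c_1^{-1}z_2)\otimes z_3\otimes\cdots\otimes z_d$.
\item The induction hypothesis then gives $x_2=c_2'c_1^{-1}z_2$ and $x_i=c_i z_i$ for $i\ge 3$, with $c_2'\prod_{i\ge3}c_i=1$.
\item Set $c_2=c_2'c_1^{-1}$. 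Then $\prod_{i=1}^d c_i=c_1\,c_2'c_1^{-1}\prod_{i\ge3}c_i=1$.
\end{itemize}

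\emph{Main obstacle.} The only delicate point is the bookkeeping of the reshaping. I must check that the index ordering of $\ltimes$ on vectors, which is the lexicographic ordering of Definition \ref{d4.1.2}, really makes the block reshaping of $x_1\otimes w$ equal to the outer product $x_1w^{\T}$. With that ordering the $i$-th block of $x_1\otimes w$ is $(x_1)_i\,w$, which is the $i$-th row of $x_1w^{\T}$, so this holds. The remaining steps are elementary linear algebra.
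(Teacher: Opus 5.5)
The paper gives no proof of this proposition. It is stated as a known fact carried over from \cite{che25}, on which this subsection is based, so there is no argument of the paper's to match, and your proposal has to stand on its own. It does. The following steps are all correct:
the reduction of $\ltimes$ on column vectors to $\otimes$; the rank-one argument for $d=2$ (column space gives $x_1=c_1z_1$, row space gives $x_2=c_2z_2$, and $z_1z_2^{\T}\neq 0$ forces $c_1c_2=1$); your check that the block reshaping of $x_1\otimes w$ is $x_1w^{\T}$; the induction step with $c_1^{-1}$ absorbed into the second factor; and the final bookkeeping $\prod_{i=1}^d c_i=c_2'\prod_{i\geq 3}c_i=1$.

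Two of your standing assumptions are genuine hypotheses that the printed statement omits, so state them as such. First, $x\neq 0$ is needed, not a convenience: for $x=0$ the claim is false, e.g.\ $x_1=0$, $z_2=0$ with $x_2,z_1\neq 0$. Second, $x_i$ and $z_i$ must lie in $\F^{n_i}$ for one and the same ${\bf n}$, which is what ``decomposable with respect to ${\bf n}$'' means. Otherwise $(1,0,0,0)^{\T}=[1]\otimes(1,0,0,0)^{\T}=(1,0)^{\T}\otimes(1,0)^{\T}$ is a counterexample.

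For comparison, a proof closer to the paper's own machinery goes through the MDA rather than through ranks.
If $x=\ltimes_{j=1}^d x_j\neq 0$, the first nonzero entry of $x$ sits at the multi-index $(e(x_1),\dots,e(x_d))$. By the conversion in Remark \ref{r4.3.4}, the head indices of the factors are therefore determined by $e(x)$ alone and are the same for every decomposition.
Now fix every index except the $i$-th at these head indices; this is $\Xi^e_{[i;{\bf n}]}$, with the row-vector products in (\ref{4.3.4}) read as Kronecker products in the natural order. The mixed-product rule gives $\Xi^e_{[i;{\bf n}]}x=x_i\prod_{j\neq i}h(x_j)$, and likewise with the $z_j$.
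Since $h(x)=\prod_j h(x_j)=\prod_j h(z_j)$, this yields all $d$ factors at once, without induction, with explicit constants $c_i=h(x_i)/h(z_i)$ whose product is $h(x)/h(x)=1$.

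Your rank-one argument is more elementary and self-contained, and it needs no head-index conversion. The MDA route gives the constants explicitly. It also shows directly that the monic factors $\Xi^e_{[i;{\bf n}]}(x/h(x))=x_i/h(x_i)$ produced by Theorem \ref{t4.3.5} do not depend on which decomposition one starts from.
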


\begin{dfn}\label{d4.3.3} \cite{che25} Let $0\neq x\in \R^n$. The head index, denoted by $e(x)$, is defined by
$$
e(x)=\min\{k\;|\;x_k\neq 0\}.
$$
The head value, denoted by $h(x)$, is
$$
h(x)=x_{e(x)}.
$$
If $h(x)=1$, $x$ is called a monic vector.
\end{dfn}.

\begin{rem}\label{r4.3.4} Assume $x=\ltimes_{i=1}^dx_i\in \R^{n_1\times \cdots\times n_d}$. If $e(x)=e$ and $e_i=e(x_i)$, $i\in [1,d]$. Then they satisfy the following conversion relation:

\begin{itemize}
\item[(i)] $(e_1,\cdots,e_d)\ra e$:

\begin{align}\label{4.3.2}
\begin{array}{l}
e=(\cdots ((e_1-1)*n_2+(e_2-1)*n_3)+\cdots\\
~~+(e_{d-1}-1)*n_d+e_d.
\end{array}
\end{align}

\item[(ii)] $e\ra (e_1,\cdots,e_d)$:

\begin{align}\label{4.3.3}
\begin{array}{l}
r_{d+1}:=e-1.\\
~~\\
r_s=\left[r_{s+1}/n_s\right],\\
c_s=r_{s+1}-n_s*r_s,\\
e_s=c_s+1,\quad s=d,d-1,\cdots,2.\\
~~\\
c_1=r_2,\\
e_1=c_1+1.
\end{array}
\end{align}
\end{itemize}
\end{rem}

\begin{dfn}\label{d2.3.3} Assume $x\in \R^{n}$, where $n=\prod_{i=1}^dn_i$, and $e(x)=e$. Set ${\bf n}=n_1\times \cdots\times n_d$. Define a set of mappings as
\begin{align}\label{4.3.4}
\Xi^e_{[i;{\bf n}]}:=\ltimes_{t=1}^{i-1}[\d_{n_t}^{e_t}]^{\mathrm{T}}\otimes I_{n_i}
\otimes \ltimes_{t=i+1}^{d}[\d_{n_t}^{e_t}]^{\mathrm{T}},\quad i\in [1,d].
\end{align}
\end{dfn}

\begin{thm}\label{t4.3.5} \cite{che25}  Assume $x\in \R^{n}$, where $n=\prod_{i=1}^dn_i$,  $e(x)=e$, and $h(x)=h$. Set
$x_0=x/h$, and
\begin{align}\label{4.3.5}
x_i:=\Xi^e_{[i,{\bf n}]}x_0,\quad i\in [1,d].
\end{align}
Then $x$ is decomposable, if and only if,
\begin{align}\label{4.3.6}
x=h\ltimes_{i=1}^dx_i.
\end{align}
\end{thm}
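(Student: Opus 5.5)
The "if" direction is immediate: if $x=h\ltimes_{i=1}^dx_i$, then $x$ is a semi-tensor product of vectors $hx_1\in\F^{n_1}$, $x_2,\dots,x_d$, so it is decomposable by Definition \ref{d4.3.1}. All the work is in the "only if" direction. Assume $x=\ltimes_{i=1}^dz_i$ with $z_i\in\F^{n_i}$. Since $x\neq 0$, every $z_i\neq 0$. For vectors the STP is the Kronecker product, so
$$
x_{(i_1,\dots,i_d)}=\prod_{t=1}^d (z_t)_{i_t}
$$
under the lexicographic index of (\ref{4.3.2}).

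\textbf{Step 1: the head index factorizes.} Let $e_t=e(z_t)$. I will show $e(x)$ corresponds to the multi-index $(e_1,\dots,e_d)$ via (\ref{4.3.2}). A product $\prod_t (z_t)_{i_t}$ is nonzero only if each $(z_t)_{i_t}\neq0$, which forces $i_t\ge e_t$ for all $t$. The multi-index $(e_1,\dots,e_d)$ is lexicographically minimal among such multi-indices, and the corresponding entry $\prod_t h(z_t)$ is nonzero. Hence $e(x)=e$ decodes to $(e_1,\dots,e_d)$ by (\ref{4.3.3}), and $h=h(x)=\prod_t h(z_t)$. This is the step needing the most care, since it relies on the lexicographic order matching the Kronecker ordering in Remark \ref{r4.3.4}.

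\textbf{Step 2: compute $\Xi^e_{[i;{\bf n}]}x_0$.} Using the mixed-product property of $\otimes$, apply $\Xi^e_{[i;{\bf n}]}$ to $x_0=\frac1h\bigotimes_t z_t$:
$$
\Xi^e_{[i;{\bf n}]}x_0=\frac1h\Big(\prod_{t\neq i}(\d_{n_t}^{e_t})^{\T}z_t\Big)z_i=\frac1h\Big(\prod_{t\neq i}h(z_t)\Big)z_i=\frac{z_i}{h(z_i)}.
$$
So each $x_i$ is the monic normalization of $z_i$.

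\textbf{Step 3: conclude.} We have
$$
h\ltimes_i x_i=h\Big(\prod_i h(z_i)\Big)^{-1}\ltimes_i z_i=\ltimes_i z_i=x,
$$
which is (\ref{4.3.6}).

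As a byproduct, this gives Proposition \ref{p4.3.2}: any two decompositions yield the same monic factors $x_i$, so their factors differ only by scalars whose product is $1$.
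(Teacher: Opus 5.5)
Your proof is correct and complete. The paper does not prove this theorem; it imports it from \cite{che25}, so there is no in-paper argument to compare against. Your direct computation is the natural argument. Step 1 shows that the head index of $\ltimes_t z_t$ decodes to $(e(z_1),\dots,e(z_d))$ and that $h(x)=\prod_t h(z_t)$. This matters because the $e_t$ in $\Xi^e_{[i;{\bf n}]}$ are computed from $e(x)$, not from the unknown factors. Step 2 then shows that $\Xi^e_{[i;{\bf n}]}x_0$ is the monic normalization $z_i/h(z_i)$. As you observe, this also yields Proposition~\ref{p4.3.2}.

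One convention should be stated explicitly. In Step 2 you read $\ltimes_{t<i}[\d_{n_t}^{e_t}]^{\T}$ as the Kronecker product $\bigotimes_{t<i}(\d_{n_t}^{e_t})^{\T}$ in the natural order, i.e.\ as $\big(\ltimes_{t<i}\d_{n_t}^{e_t}\big)^{\T}$. That is the intended meaning, but it is not what (\ref{2.1.1}) gives literally. For row vectors $a\in{\cal M}_{1\times p}$ and $b\in{\cal M}_{1\times q}$ one has $a\ltimes b=a(b\otimes I_p)=b\otimes a$, so the literal STP reverses the order of the selector rows. Under the literal reading the formula fails. For example, take $x=\d_2^1\ltimes\d_2^2\ltimes\d_2^1=\d_8^3$. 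Then $\Xi^e_{[3;{\bf n}]}=([\d_2^2]^{\T}\otimes[\d_2^1]^{\T})\otimes I_2=[\d_4^3]^{\T}\otimes I_2$, which gives $x_3=0$ for a decomposable $x$. A one-line remark fixing this convention closes the point; the mathematics of your argument is unaffected.
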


The algorithm provided by (\ref{4.3.5})-(\ref{4.3.6}) is called the MDA.

\section{KCF vs EEs of Non-Square Matrix}

\subsection{Kronecker Canonical Form of Matrix Pencil}

\begin{dfn}\label{d5.1.1} \cite{gan59}
For $A,~B\in {\cal M}_{m\times n}$, The matrix pencil
is defined by
$$
A-\l B=-(\l B-A)\in {\cal M}_{m\times n}.
$$
\end{dfn}

Note that both $A-\l B$ and $\l B-A$ are used in literature for matrix pencil. We also use both for convenience.

The following result is fundamental for Kronecker canonical form of matrix pencil.

\begin{thm}\label{t5.1.2} \cite{gan59} Consider a pencil $\l B-A\in {\cal M}_{m\times n}$. There exist two nonsingular matrices
 $P\in {\cal M}_{m\times m}$ and $Q\in {\cal M}_{n\times n}$ such that
\begin{align}\label{5.1.1}
P(\l B-A)Q~=~K(\l),
\end{align}
where $K(\l)$, called the Kronecker  canonical form, is constructed as follows:
\begin{align}\label{5.1.2}
\begin{array}{ccl}
P(\l B-A)Q&=& \diag\left(L_{\e_1},\cdots, L_{\e_p},L^{\T}_{\eta_1},\cdots,L^{\T}_{\eta_q},\right.\\
~&~&\left.N_{\a_1},\cdots,N_{\a_r},J^{\b_1},\cdots, J^{\b_s}\right),\\
\end{array}
\end{align}
where
$$
\begin{array}{lll}
L_0=\haa, & L_1=\begin{bmatrix} \l&-1\end{bmatrix},&L_2=\begin{bmatrix}\l&-1&0\\0&\l&-1\end{bmatrix},\\
\end{array}
$$
$$
\begin{array}{ll}
L_3=\begin{bmatrix}\l&-1&0&0\\0&\l&-1&0\\0&0&\l&-1\end{bmatrix},&\cdots\\
\end{array}
$$
$$
\begin{array}{lll}
L^{\T}_0=\vaa, & L^{\T}_1=\begin{bmatrix} -1\\\l\end{bmatrix},&L^{\T}_2=\begin{bmatrix}-1&0\\\l&-1\\0&\l\end{bmatrix},\\
\end{array}
$$
$$
\begin{array}{ll}
L^{\T}_3=\begin{bmatrix}-1&0&0\\\l&-1&0\\0&\l&-1\\0&0&\l\end{bmatrix},&\cdots\\
\end{array}
$$

$$
\begin{array}{lll}
N_1=[-1], & N_2=\begin{bmatrix} -1&0\\\l&-1\end{bmatrix},&N_3=\begin{bmatrix}-1&0&0\\\l&-1&0\\0&\l&-1\end{bmatrix},\\
\cdots&~&~\\
\end{array}
$$

$$
J^{\b_i}=\diag\left(J^{\b_i}_{j_1},\cdots, J^{\b_i}_{j_{t_i}}\right),~i\in[1,s],
$$
and
$$
\begin{array}{ll}
J^{i}_1=[\l-\l_i], & J^i_2=\begin{bmatrix} \l-\l_i&0\\-1&\l-\l_i\end{bmatrix},\\
\end{array}
$$
$$
\begin{array}{l}
J^i_3=\begin{bmatrix}\l-\l_i&0&0\\-1&\l-\l_i&0\\0&-1&\l-\l_i\end{bmatrix},\cdots\\
i=1,2,\cdots, s.
\end{array}
$$

\end{thm}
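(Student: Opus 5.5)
We prove the theorem by the classical Kronecker--Weierstrass reduction (following \cite{gan59}). The strategy is to peel off the singular part of the pencil $\l B-A$ first, and then bring the remaining regular part to Weierstrass form by Jordan theory. Throughout, \emph{strict equivalence} means $\l B-A\mapsto P(\l B-A)Q$ with constant nonsingular $P,Q$. Strict equivalence is transitive and respects block-diagonal sums, so it suffices to split off one canonical block at a time and induct on $m+n$.

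\emph{Step 1 (singular column part).} Suppose the columns of $\l B-A$ are linearly dependent over the field of rational functions $\F(\l)$. Choose a nonzero polynomial vector $x(\l)=x_0-\l x_1+\cdots+(-1)^\e\l^\e x_\e$ of minimal degree $\e$ with $(\l B-A)x(\l)=0$. Expanding by powers of $\l$ gives the chain
$$Ax_0=0,\quad Bx_{k-1}=Ax_k \ (k\in[1,\e]),\quad Bx_\e=0.$$
By minimality, $x_0,\dots,x_\e$ are linearly independent, and so are $Ax_1,\dots,Ax_\e$. Complete these to bases of $\F^n$ and $\F^m$ respectively. In the new coordinates the pencil becomes
$$\begin{bmatrix}L_\e & D(\l)\\ 0 & \l B^*-A^*\end{bmatrix}.$$
(If $\e=0$, this step just produces a zero column, i.e. the block $L_0$.)

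\emph{Step 2 (removing the coupling).} This is the main obstacle: we must show that the off-diagonal block $D(\l)$ can be eliminated by a further strict equivalence, i.e.\ that there exist constant matrices $X,Y$ with
$$D(\l)=L_\e Y-X(\l B^*-A^*).$$
Following Gantmacher, we write this as a linear system in the entries of $X$ and $Y$ and solve it by comparing coefficients of $\l$. The key input is minimality of $\e$: the reduced pencil $\l B^*-A^*$ has no column null vector of degree $<\e$. Without this, the coefficient system is not guaranteed to be solvable, so this is exactly where the argument must lean on the choice made in Step 1.

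\emph{Step 3 (singular row part and induction).} Iterating Steps 1--2 splits off all blocks $L_{\e_i}$. Applying the same argument to the transposed pencil then splits off the blocks $L^{\T}_{\eta_j}$. What remains is a square pencil whose rows and columns are both independent over $\F(\l)$, i.e.\ $\det(\l B_0-A_0)\not\equiv0$.

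\emph{Step 4 (regular part).} For the regular pencil, pick $c$ with $\det(cB_0-A_0)\neq0$ and work with $(cB_0-A_0)^{-1}(\l B_0-A_0)$. Decomposing via generalized eigenspaces (equivalently, Jordan form applied to $(cB_0-A_0)^{-1}B_0$) separates the part where $B_0$ is invertible from the part where $B_0$ is nilpotent. On the invertible part, the pencil is $\l I-B_0^{-1}A_0$, and a Jordan form of $B_0^{-1}A_0$ yields the blocks $J^{\b_i}$; the lower-triangular shape shown in the statement is obtained by reversing the basis order, and the signs are fixed by rescaling. On the nilpotent part, the pencil is $\l N-I$ with $N$ nilpotent, and the Jordan form of $N$ yields the blocks $N_{\a_k}$.

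Finally, we assemble all blocks: the composition of the strict equivalences from Steps 1--4 gives the nonsingular $P$ and $Q$ of (\ref{5.1.2}).
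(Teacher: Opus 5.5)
Your outline is the classical Kronecker--Weierstrass reduction of \cite{gan59}, and that is exactly what the paper relies on: it states the theorem with this citation and gives no proof of its own, so your route is the intended one. There are two caveats. First, Step 2 asserts rather than proves its two non-routine facts, namely that $\l B^*-A^*$ has no polynomial null vector of degree $<\e$ (this follows because $y\mapsto L_\e y$ maps polynomial $(\e+1)$-vectors of degree $\le\e-1$ bijectively onto polynomial $\e$-vectors of degree $\le\e$) and that $D(\l)=L_\e Y-X(\l B^*-A^*)$ is solvable, so at that point you lean on \cite{gan59} just as the paper does. Second, for the pencil $\l B-A$ the expansion in Step 1 should be $x(\l)=\sum_k\l^kx_k$, since your alternating-sign version gives $Bx_{k-1}=-Ax_k$; with $\sum_k\l^kx_k$ you get $Bx_{k-1}=Ax_k$ and precisely the paper's $L_\e$.
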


Note that in the above $\haa$ stands for several zero columns, and $\vaa$ stands for several zero rows.
We use $|L_0|$ ($|L^{\T}_0|$) for the number of zero columns (rows).

We refer to \cite{van79,bee88} for the calculation of Kronecker canonical form of a matrix pencil. It is worth to emphasize that the calculation involves only (a) elementary row and column operaions; (b) Jordan form transformation of square matrix. Hence, it is straightforward computable, (One can use  MatLab fundamental functions).

\subsection{Solving EEs From KCF}

Consider
\begin{align}\label{5.2.1}
(\l B-A)x=0.
\end{align}
Assume $P(\l B-A)Q:=K(\l)$ is the Kronecker canonical form as in (\ref{5.1.2}). Define
$$
z=Q^{-1}x.
$$
Then (\ref{5.2.1}) is equivalent to
 \begin{align}\label{5.2.2}
K(\l)z=0.
\end{align}

According to the block structure of $K(\l)$, $z$ is split to several sub-vectors as follows:
 \begin{align}\label{5.2.3}
 \begin{array}{ccl}
z&=&\left( z^{\T}_{\epsilon_1},\cdots, z^{\T}_{\epsilon_p}, z^{\T}_{\eta_1},\cdots, z^{\T}_{\eta_q}\right.\\
~&~& \left.z^{\T}_{\a_1},\cdots, z^{\T}_{\a_r}, z^{\T}_{\b_1},\cdots, z^{\T}_{\b_s}\right)^{\T}\\
\end{array}
\end{align}

We look all nonzero solutions $z$ with respect to each diagonal block of $K(\l)$:

\begin{itemize}
\item[(i)] Corresponding to $L_{\epsilon_i}$.

Assume $\epsilon_i=0$, that is, we have $L_0$ and assume $|L_0|=\ell>0$, which means there are $\ell$ columns of zero. Then the corresponding $0\neq z_{\epsilon_i}\in \C^{\ell}$ can be arbitrary. We call such eigenvector vector-free eigenvector, which means the eigenvector can be arbitrary and the corresponding eigenvalue is zero.

Assume $\epsilon_i=r>0$, Then the corresponding segment solution is
$$
z_{\epsilon_i}=(1,\lambda,\cdots,\lambda_r)^{\T}.
$$
We call such eigenvector the value-free eigenvector, which means the eigenvalue can be arbitrary chosen and the corresponding eigenvector is an $\l$-depending polynomial form.

\item[(ii)] Corresponding to $L^{\T}_{\eta_i}$.

Let $\eta_i=r$. Then it is easy to verify that the corresponding
$$
z_{\eta_i}=0\in \C^r.
$$

\item[(iii)] Corresponding to $N_{\a_i}$.

Let $\a_i=r$. Then it is easy to verify that the corresponding
$$
z_{\a_i}=0\in \C^r.
$$

\item[(iv)] Corresponding to $J^{\b_i}_r$.

The eigenvalue is $\l_i$ and the corresponding eigenvector and root vectors are
$$
\begin{array}{l}
J^{\b_i}_r z^{\b_i}_1=\l_i z^{\b_i}_1,\\
J^{\b_i}_r z^{\b_i}_s=\l_i z^{\b_i}_s+J^{\b_i}_{s-1},\quad s=2,3,\cdots,r-1,
\end{array}
$$
They are the regular EEs corresponding to the Jordan blocks.
\end{itemize}

The following lemma is obvious.
\begin{lem}\label{l5.2.1}
$0\neq z\in \C^n$ is called an eigenvector of $K(\l)$ with respect to eigenvalue $\l=\l_0$ if
 \begin{align}\label{5.2.301}
K(\l_0)z=0.
\end{align}
$z$ is an eigenvector of $K(\l)$ with respect to the eigenvalue $\l=\l_0$, if and only if, $x=Qz$ is the eigenvector of $A$ with respect to eigenvalue $\l_0$ and type $B$. That is,
 \begin{align}\label{5.2.302}
(\l_0B-A)x=0.
\end{align}
\end{lem}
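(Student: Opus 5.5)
The plan is to use the Kronecker decomposition $P(\l B-A)Q=K(\l)$ from Theorem \ref{t5.1.2}, which holds identically in $\l$, and evaluate it at the particular value $\l=\l_0$. Both directions of the equivalence then come from multiplying by the invertible constant matrices $P$, $Q$ and their inverses.

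First I will record that $P$ and $Q$ do not depend on $\l$; they are constant nonsingular matrices. So the identity (\ref{5.1.1}) is a polynomial identity in $\l$, and substituting $\l=\l_0$ gives the numerical matrix identity
\begin{align*}
P(\l_0B-A)Q=K(\l_0).
\end{align*}
This is the only point needing care. Each block of $K(\l)$ is affine in $\l$, and so is $P(\l B-A)Q$, so the evaluation is legitimate entrywise.

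For the direction ($\Rightarrow$), suppose $0\neq z$ and $K(\l_0)z=0$, and set $x=Qz$. Since $Q$ is nonsingular, $x\neq 0$. Then $P(\l_0B-A)x=P(\l_0B-A)Qz=K(\l_0)z=0$. Multiplying on the left by $P^{-1}$ gives $(\l_0B-A)x=0$, which is (\ref{5.2.302}). Hence $x$ is an eigenvector of $A$ with type $B$ and eigenvalue $\l_0$ in the sense of (\ref{1.5}).

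For the direction ($\Leftarrow$), suppose $0\neq x$ satisfies (\ref{5.2.302}), and put $z=Q^{-1}x\neq 0$, so that $x=Qz$. Then $K(\l_0)z=P(\l_0B-A)Qz=P(\l_0B-A)x=0$. This is exactly the change of variables $z=Q^{-1}x$ already used to pass from (\ref{5.2.1}) to (\ref{5.2.2}).

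There is no real obstacle here. The only points to state explicitly are that the transformation matrices are independent of $\l$, and that nonsingularity of $Q$ preserves nonzeroness in both directions. As a remark, I would add that $x\mapsto Q^{-1}x$ is a linear isomorphism from $\ker(\l_0B-A)$ onto $\ker K(\l_0)$. This lets the blockwise solutions (i)--(iv) listed above be transported back to all eigenvectors of $A$ with respect to $B$.
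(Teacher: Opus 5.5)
Your proof is correct and is the argument the paper relies on. The paper simply calls the lemma obvious after noting that the change of variables $z=Q^{-1}x$ with constant nonsingular $P,Q$ makes (\ref{5.2.1}) equivalent to (\ref{5.2.2}); your remarks that $P,Q$ do not depend on $\l$ (so the identity can be evaluated at $\l_0$) and that nonsingularity of $Q$ preserves nonzeroness fill in the details it leaves implicit.
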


Using Lemma \ref{l5.2.1}, all possible nonzero solutions $z$ of (\ref{5.2.301}) can be used to  construct nonzero solution $x=Qz$ for (\ref{5.2.302}). The following result is an immediate consequence.

\begin{thm}\label{t5.2.2} Assume the Kronecker canonical form of pencil $\l B-A$ is obtained as (\ref{5.1.2}). According to the diagonal form of $K(\l)$, decompose $Q$ into
\begin{align}\label{5.2.4}
\begin{array}{ccl}
Q&=&[Q_{\epsilon_1},\cdots, Q_{\epsilon_p}, Q_{\eta_1},\cdots, Q_{\eta_p},\\
~&~&Q_{\a_1},\cdots, Q_{\a_r},Q_{\b_1},\cdots, Q_{\b_s}].
\end{array}
\end{align}
Then
\begin{itemize}
\item[(i)] Corresponding to each $\epsilon_i=0$, $Q_{\epsilon_i}$ is a set of vector-free eigenvectors corresponding to eigenvalue $\l=0$;
\item[(ii)] Corresponding to each $\epsilon_i>0$, $Q_{\epsilon_i}$ is a  value-free eigenvector corresponding to arbitrary given eigenvalue $\l\in \C$;
\item[(iii)] Corresponding to each Jordan block $J^{\b_i}$, $Q_{\b_i}$ is a chain, containing an eigenvector and $\b_j-1$ root vectors corresponding to eigenvalue $\l_i$.
\end{itemize}
Moreover, all the eigenvalues/eigenvectors of $A$ with respect to $B$ are of the forms (i)-(iii).
\end{thm}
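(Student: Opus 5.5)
The plan is to reduce everything to the block-diagonal system $K(\l)z=0$ via Lemma \ref{l5.2.1}, and then read off the solutions block by block. Because $P$ and $Q$ are nonsingular, $(\l_0B-A)x=0$ holds if and only if $K(\l_0)z=0$ with $z=Q^{-1}x$. So the set of eigenpairs of $A$ with respect to $B$ is exactly the image under $z\mapsto Qz$ of the eigenpairs of $K(\l)$. Since $K(\l)$ is block diagonal, $K(\l_0)z=0$ decouples into independent equations for the segments of $z$ in (\ref{5.2.3}), namely $L_{\e_i}(\l_0)z_{\e_i}=0$, $L^{\T}_{\eta_i}(\l_0)z_{\eta_i}=0$, $N_{\a_i}(\l_0)z_{\a_i}=0$ and $J^{\b_i}(\l_0)z_{\b_i}=0$. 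The general solution is the direct sum of the segment solutions, and $x=Qz=\sum Q_{\bullet}z_{\bullet}$ with $Q$ partitioned conformally as in (\ref{5.2.4}).

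Next I will analyze each block type, justifying the case list (i)--(iv) preceding Lemma \ref{l5.2.1}.

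For $L^{\T}_{\eta}$, the first row gives $-z_1=0$, and successive rows $\l_0 z_{k}-z_{k+1}=0$ then force every component to vanish. So $L^{\T}_{\eta}(\l_0)$ has full column rank for every $\l_0$.

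For $N_{\a}$, the matrix is lower triangular with diagonal $-1$. It is nonsingular for all $\l_0$, so $z_{\a}=0$.

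For $L_\e$ with $\e>0$, the kernel is one-dimensional for every $\l_0$. It is spanned by $(1,\l_0,\dots,\l_0^{\e})^{\T}$, because the matrix has $\e$ rows and rank $\e$. Hence $Q_{\e_i}(1,\l,\dots,\l^{\e_i})^{\T}$ is a value-free eigenvector. For $\e=0$, the zero columns leave the corresponding coordinates of $z$ free, and the columns of $Q_{\e_i}$ are eigenvectors. I will need to reconcile the statement's phrase ``eigenvalue $\l=0$'' with the fact that these columns actually satisfy the equation for any $\l$. I would phrase it as in the paper's case (i).

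For $J^{\b_i}$, $J(\l_0)$ is nonsingular unless $\l_0=\l_i$. At $\l_0=\l_i$, the kernel of the Jordan block $J_r^i(\l_i)$ is spanned by the last unit vector, given the lower-bidiagonal convention. Solving $J(\l_i)w_k=w_{k-1}$-type relations then produces a chain along the columns of $Q_{\b_i}$. The column order must be fixed so that one column of $Q_{\b_i}$ is the eigenvector and the remaining $\b_i-1$ columns are root vectors in the sense of (\ref{1.2}), now with $A$ replaced by the pencil. This step should be verified using $AQ_{\b_i}=P^{-1}(\dots)$, i.e.\ rewriting $P(\l B-A)Q=K(\l)$ as $AQ=P^{-1}K_0$ and $BQ=P^{-1}K_1$ and comparing columns.

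Finally, completeness (``Moreover'') follows because any solution $z$ of $K(\l_0)z=0$ is a direct sum of segment solutions. Segments for $\eta$ and $\a$ vanish, and $\b$-segments vanish unless $\l_0$ is some $\l_i$. So every $x=Qz$ is a combination of vectors of types (i)--(iii).

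I expect the main obstacle to be the Jordan chain claim (iii). There, the root-vector relations must be stated correctly for a pencil, namely $(A-\l_iB)x_k=-Bx_{k-1}$ or an equivalent form, rather than the matrix form (\ref{1.2}). The index and sign conventions of the lower-bidiagonal $J^i_r$ must also be matched to the ordering of the columns of $Q_{\b_i}$. I would first fix the convention explicitly and then check one $2\times 2$ block by hand.
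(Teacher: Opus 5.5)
Your proposal is correct and follows the paper's own argument: the paper also passes to $K(\l_0)z=0$ with $z=Q^{-1}x$ (Lemma~\ref{l5.2.1}), solves block by block exactly as in your case analysis, and then states the theorem as an immediate consequence. The points you flag are refinements rather than gaps: the $L_0$ columns of $Q$ satisfy $Aq=Bq=0$, so they are eigenvectors for every $\l$ and not only for $\l=0$; and with the paper's lower-bidiagonal convention each elementary block $J^i_r$ gives $(A-\l_iB)q_r=0$ and $(A-\l_iB)q_k=Bq_{k+1}$ for $k<r$, so the eigenvector is the last column of the segment, the sign in your tentative chain relation should be $+$, and $Q_{\b_i}$ consists of one such chain for each elementary block of $J^{\b_i}$.
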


As aforementioned that the Kronecker canonical form is straightforward computable. According to Theorem \ref{t5.2.2}, the general EEs of a matrix $A$ with respect to type $B$, as well as hypermatrices,  are also straightforward computable.

\section{EEs and Canonical Form of Tensor}

\subsection{EEs of Hypermatrix}

First, we consider the OEEs of hypermatrices. As aforementioned, the OEEs of a hypermatrix is considered as EEs of a special matricizing of the hypermatrix, the calculation of OEEs of hypermatrices becomes that of general EEs of matrices, and the problem has been solved in previous section.

As for the UEE of tensor, we can check the set of OEE to see whether each of them is decomposable or not by using MDA. Then all decomposable OEEs  solve the UEE.  Similarly, DEE can be picked out from UEE, and as a special DEE, HDEE can also be obtained.

We give a numerical example to demonstrate this.

\begin{exa}\label{e6.1.1}
Consider ${\cal A}=(a_{i,j.k}),~{\cal B}=(b_{i,j,k})\in \R^{4\times 4\times 14}$, with
$$
A=M^{k\times\{i,j\}}({\cal A}),\quad B=M^{k\times\{i,j\}}({\cal B})
$$
as
$$
\begin{tiny}
A=
\left[
\begin{array}{cccccccccccccccc}
0&2&0&1&0&0&0&0&0&0&0&0&0&0&0&0\\
0&0&0&0&0&1&0&0&0&0&0&0&0&0&0&0\\
0&0&0&0&0&0&1&0&0&0&0&0&0&0&0&0\\
0&0&0&0&0&0&0&0&0&0&0&0&0&0&0&0\\
0&0&0&0&0&0&0&1&0&0&0&0&0&0&0&0\\
0&0&0&0&0&0&0&0&1&0&0&0&0&0&0&0\\
0&5&0&0&0&0&0&0&0&1&0&0&0&0&0&0\\
0&0&0&0&0&0&0&0&0&0&0&0&0&0&0&0\\
0&0&0&0&0&0&0&0&0&0&1&0&0&0&0&0\\
0&0&0&0&0&0&0&0&0&0&0&1&0&-8&0&0\\
0&0&0&0&0&0&0&0&0&0&0&0&1&0&0&0\\
0&0&0&0&0&0&0&0&0&0&0&0&0&2&0&0\\
0&0&0&0&0&0&0&0&0&0&0&0&0&-24&1&-6\\
0&0&0&0&0&0&0&0&0&0&0&0&0&12&1&3\\
\end{array}
\right]
\end{tiny}
$$
$$
\begin{tiny}
B=
\left[
\begin{array}{cccccccccccccccc}
0&10&1&0&0&0&0&0&0&2&0&0&0&0&0&0\\
0&0&  0&0&1&0&0&0&0&0&0&0&0&0&0&0\\
0&0&  0&0&0&1&0&0&0&0&0&0&0&0&0&0\\
0&0&  0&0&0&0&0&0&0&0&0&0&0&0&0&0\\
0&0&  0&0&0&0&0&0&0&0&0&0&0&0&0&0\\
0&0&  0&0&0&0&0&1&0&0&0&0&0&0&0&0\\
0&0&  0&0&0&0&0&0&1&0&0&0&0&0&0&0\\
0&5&  0&0&0&0&0&0&0&1&0&0&0&0&0&0\\
0&0&  0&0&0&0&0&0&0&0&0&0&0&0&0&0\\
0&0&  0&0&0&0&0&0&0&0&0&0&0&-4&0&0\\
0&0&  0&0&0&0&0&0&0&0&0&1&0&0&0&0\\
0&0&  0&0&0&0&0&0&0&0&0&0&0&1&0&0\\
0&0&  0&0&0&0&0&0&0&0&0&0&0&-8&1&-2\\
0&0&  0&0&0&0&0&0&0&0&0&0&0&4&0&1\\
\end{array}
\right]
\end{tiny}
$$
Using standard procedure, we can find $P$ and $Q$ as follows:
 $$
\begin{tiny}
P=
\left[
\begin{array}{cccccccccccccc}
1&0&0&0&0&0&0&-2&0&0&0&0&0&0\\
0&1&0&0&0&0&0&0&0&0&0&0&0&0\\
0&0&1&0&0&0&0&0&0&0&0&0&0&0\\
0&0&0&1&0&0&0&0&0&0&0&0&0&0\\
0&0&0&0&1&0&0&0&0&0&0&0&0&0\\
0&0&0&0&0&1&0&0&0&0&0&0&0&0\\
0&0&0&0&0&0&1&0&0&0&0&0&0&0\\
0&0&0&0&0&0&0&1&0&0&0&0&0&0\\
0&0&0&0&0&0&0&0&1&0&0&0&0&0\\
0&0&0&0&0&0&0&0&0&1&0&4&0&0\\
0&0&0&0&0&0&0&0&0&0&1&0&0&0\\
0&0&0&0&0&0&0&0&0&0&0&1&0&0\\
0&0&0&0&0&0&0&0&0&0&0&0&1&2\\
0&0&0&0&0&0&0&0&0&0&0&0&0&1\\
\end{array}
\right],
\end{tiny}
$$
and
$$
\begin{tiny}
Q=
\left[
\begin{array}{cccccccccccccccc}
1&0 &0&0&0&0&0&0&0&0&0&0&0&0&0&0\\
0&1 &0&0&0&0&0&0&0&0&0&0&0&0&0&0\\
0&0 &1&0&0&0&0&0&0&0&0&0&0&0&0&0\\
0&-2&0&1&0&0&0&0&0&0&0&0&0&0&0&0\\
0&0 &0&0&1&0&0&0&0&0&0&0&0&0&0&0\\
0&0 &0&0&0&1&0&0&0&0&0&0&0&0&0&0\\
0&0 &0&0&0&0&1&0&0&0&0&0&0&0&0&0\\
0&0 &0&0&0&0&0&1&0&0&0&0&0&0&0&0\\
0&0 &0&0&0&0&0&0&1&0&0&0&0&0&0&0\\
0&-5&0&0&0&0&0&0&0&1&0&0&0&0&0&0\\
0&0 &0&0&0&0&0&0&0&0&1&0&0&0&0&0\\
0&0 &0&0&0&0&0&0&0&0&0&1&0&0&0&0\\
0&0 &0&0&0&0&0&0&0&0&0&0&1&0&0&0\\
0&0 &0&0&0&0&0&0&0&0&0&0&0&1&0&0\\
0&0 &0&0&0&0&0&0&0&0&0&0&0&0&1&0\\
0&0 &0&0&0&0&0&0&0&0&0&0&0&-4&0&1\\
\end{array}
\right].
\end{tiny}
$$
And  $P(\l B-A)Q$ has the Kronecker canonical form  (\ref{6.1.1}). (At the end of the paper.)

Observing (\ref{6.1.1}), we search all non-zero solutions block by block.
\begin{itemize}
\item[(i)]
$$
L_0=\haa,
$$
with $|L_0|=2$.

Then we have two vector-free eigenvectors with respect to eigenvalue $0$, which are
$$
\begin{array}{l}
x_1=\Col_1(Q)\\
~~=(1,0,0,0,0,0,0,0,0,0,0,0,0,0,0,0)^{\T}\\
x_2=\Col_2(Q)\\
~~=(0,1,0,-2,0,0,0,0,0,-5,0,0,0,0,0,0)^{\T}\\
\end{array}
$$
\item[(ii)]
$$
L_1=[\l,-1].
$$
Then we have a value-free eigenvector as
$$
\begin{array}{ccl}
x_3&=&[\Col_3(Q),\Col_4(Q)]\begin{bmatrix}1\\ \l\end{bmatrix}\\
~&=&(0,0,1,0,0,0,0,0,0,0,0,0,0,0,0,0)^{\T}\\
~&~&+\l (0,0,0,1,0,0,0,0,0,0,0,0,0,0,0,0)^{\T}.\\
\end{array}
$$
\item[(iii)]
$$
L_2=\begin{bmatrix}
\l&-1&0\\
0&\l&-1\\
\end{bmatrix}.
$$
Then we have another value-free eigenvector as
$$
\begin{array}{ccl}
x_4&=&[\Col_5(Q),\Col_6(Q),\Col_7(Q)]\begin{bmatrix}1\\ \l\\\l^2\end{bmatrix}\\
~&=&(0,0,0,0,1,0,0,0,0,0,0,0,0,0,0,0)^{\T}\\
~&~&+\l (0,0,0,0,0,1,0,0,0,0,0,0,0,0,0,0)^{\T}\\
~&~&+\l^2 (0,0,0,0,0,0,1,0,0,0,0,0,0,0,0,0)^{\T}\\
\end{array}
$$
\item[(iv)]
$$
L_0^{\T}=\vaa,
$$
with $|L_0^{\T}|=1$.
Correspondingly, there is no solution.
\item[(v)]
$$
L_3^{\T}=\begin{bmatrix}
-1&0&0\\
\l&-1&0\\
0&\l&-1\\
0&0&\l
\end{bmatrix}.
$$
Correspondingly, there is no non-zero solution.
\item[(vi)]
$$
N_1=[-1].
$$
Correspondingly, there is no non-zero solution.
\item[(vii)]
$$
N_2=\begin{bmatrix}
-1&0\\
\l&-1
\end{bmatrix}.
$$
Correspondingly, there is no non-zero solution.
\item[(viii)]
$$
J^2_1=[\l-2].
$$
We have a regular eigenvector
$$
\begin{array}{ccl}
x_5&=&\Col_{14}(Q)\\
~&=&(0,0,0,0,0,0,0,0,0,0,0,0,0,1,0,-4)^{\T},
\end{array}
$$
with respect to eigenvalue $\l_0=2$.
\item[(ix)]
$$
J^3_2=\begin{bmatrix}
\l-3&0\\
-1&\l-3
\end{bmatrix}.
$$
We have  another regular eigenvector
$$
\begin{array}{ccl}
x_6&=&\Col_{16}(Q)\\
~&=&(0,0,0,0,0,0,0,0,0,0,0,0,0,0,0,1)^{\T},
\end{array}
$$
with respect to eigenvalue $\l_0=3$.
\end{itemize}

\end{exa}

\begin{rem}\label{r6.1.2} Recall Example \ref{e6.1.1}.
\begin{itemize}
\item[(i)] There are $6$  sets of EEs described by $x_1,\cdots,x_6$. A straightforward verification shows that they satisfy (\ref{5.2.302}).

\item[(ii)] Corresponding to block $J^3_2$, one sees easily that
$$
x_7=\Col_{15}(Q)
$$
is a ``root vector", satisfying
$$
P(\l_0 B-A)x_7+\Phi x_6=0,
$$
where $\l_0=3$, $\Phi=\diag(0_{12\times 14},I_2)$ is the mapping which draws out the last block of $K(\l)$.

Since in non-square case the chain of eigenvector with root vectors does not form an $A$-invariant sub-space, they are of less interest.

\end{itemize}
\end{rem}

Example \ref{e6.1.1} reveals all the OEEs of the hypermatrix $A$ with respect to type $B$.  The including relation (\ref{4.2.5})   shows that we can find UEE, DEE, and HDEE sequentially. The following example shows this. Note that if block-vice eigenvectors with different blocks are corresponding to the same eigenvalue, their combinations remain to be the eigenvectors of the eigenvalue. Particularly, value-free eigenvectors can be used to combined with any eigenvectors as long as the free eigenvalues can be chosen as the same with any given eigenvector.

\begin{exa}\label{e6.1.3} Recall Example \ref{e6.1.1}.
\begin{itemize}
\item[(i)] Consider $x_1$:
Since
$$
x_1=z_1\ltimes z_1,
$$
with $z_1=(1,0,0,0)^{\T}$, it is UEE and DEE.

Note that since ${\cal A}$ is of order $3$ and $A$, $B$ are of the standard form (i.e., one row index and two column indices), each DEE is HDEE. So for this example we do not need to mention HDEE anymore.

\item[(ii)] Consider $x_2$:
Using MDA, it is easy to verify that $x_2$ is not separable.

\item[(iii)] Consider $x_3$:
Since for any $\l$ we have
$$
x_3=(1,0,0,0)^{\T}\ltimes (0,0,1,\l)^{\T},
$$
it is UEE for arbitrary $\l$. But it is not DEE.

Set $\l=0$, we have
$$
x_3^0=(1,0,0,0)^{\T}\ltimes (0,0,1,0)^{\T},
$$
Combining with $x_1$, one sees easily that
$$
x=ax_1+bx_3^0=(1,0,0,0)^{\T}\ltimes (a,0,b,0)^{\T}
$$
is a general form of UEE, with respect to $\l=0$, where $|a|+|b|>0$, (i.e., at least one of $a$ and $b$ is non-zero).

\item[(iv)] Consider $x_4$:
$$
x_4=(0,1,0,0)^{\T}\ltimes (1,\l,\l^2,0)^{\T},
$$
it is UEE for arbitrary $\l$. But it is not DEE.

Set $\l=0$, we have
$$
x_4^0=(0,1,0,0)^{\T}\ltimes (1,0,0,0)^{\T},
$$
Combining with $x_1$, one sees easily that
$$
x=ax_1+bx_4^0=(a,b,0,0)^{\T}\ltimes (1,0,0,0)^{\T}
$$
is another set of UEE with respect to $\l=0$.

\item[(iv)] Consider $x_5$:
We have
$$
x_5=(0,0,0,1)^{\T}\ltimes (0,1,0,-4)^{\T},
$$
which is UEE with respect to $\l=2$.

If we consider the OEE with respect to $\l=2$, then
$$
x_3^2:=x_3|_{\l=2}=(1,0,0,0)^{\T}\ltimes (0,0,1,2),
$$
and
$$
x_4^2:=x_4|_{\l=2}=(0,1,0,0)^{\T}\ltimes (1,2,4,0),
$$
are also such OEEs. Hence the general OEE with respect to $\l=2$ is
$$
x=ax_5+bx_3^2+cx_4^2,
$$
where $|a|+|b|+|c|>0$.
\item[(iv)] Consider $x_6$:
Since
$$
x_6=(0,0,0,1)^{\T}\ltimes (0,0,0,1)^{\T},
$$
it is DEE  with respect to $\l=3$.

If we consider the OEE with respect to $\l=3$, then
$$
x_3^3:=x_3|_{\l=3}=(1,0,0,0)^{\T}\ltimes (0,0,1,3),
$$
and
$$
x_4^3:=x_4|_{\l=3}=(0,1,0,0)^{\T}\ltimes (1,3,9,0),
$$
are also such OEEs. Hence the general OEE with respect to $\l=3$ is
$$
x=ax_6+bx_3^3+cx_4^3.
$$
\end{itemize}
\end{exa}

\subsection{Canonical Form of Tensor}

Corresponding to the Jordan canonical form of square matrices, we would like to present a canonical form for tensors. Consider ${\cal A}=(a_{i_1,\cdots,i_d})\in \F^{\overbrace{n\times\cdots\times n}^d}$, where $\vec{i}=\vec{j}\cup \vec{k}$ is a partition with
$|\vec{j}|=s$, $|\vec{k}|=r$ and $s+r=d$. Let $u=n^s$, $v=n^r$, and
\begin{align}\label{6.2.1}
A=M^{\vec{j}\times \vec{k}}({\cal A})\in {\cal M}_{u\times v}.
\end{align}
We look for a canonical form of $A$, called the canonical form of tensor ${\cal A}$ with respect to the partition $\vec{i}=\vec{j}
\cup \vec{k}$.

In matrix case, it is easy to see that the Jordan canonical form can be produced from the Kronecker canonical form
of pencil $K(\l)=P(A-\l B)Q$, where $B=I_n$ and $P=Q^{-1}$  by setting $\l=0$. That is, the Jordan canonical form of a square matrix
$A\in {\cal M}_{n\times n}$ is the Kronecker canonical form $K(0)$.
Motivated by this, we give the following definition.

\begin{dfn}\label{d6.2.1} Let $A$ be a special matricizing of tensor  ${\cal A}=(a_{i_1,\cdots,i_d})\in \F^{\overbrace{n\times\cdots\times n}^d}$ with respect to the partition $\vec{i}=\vec{j}\cup \vec{k}$ and
\begin{align}\label{6.2.2}
K(\l)=P(\l B-A)Q
\end{align}
is the Kronecker canonical form of pencil $\l B-A$ with $B=I_{u\times v}$. Then
\begin{align}\label{6.2.3}
-K(0)\sim A
\end{align}
is called the Kronecker canonical form of ${\cal A}$ (with respect to  the partition $\vec{i}=\vec{j}\cup \vec{k}$ ).
\end{dfn}

\begin{rem}\label{r6.2.2}
\begin{itemize}
\item[(i)] Take (\ref{3.2.3}) into consideration, we can simply replace $I_{u,v}$ by $\Psi^P_{u\times v}$, because
$$
(\l I_{m\times n}-A)x=0,\quad x\in \F^v
$$
is equivalent to
$$
(\l \Psi^P_{u\times v}-A)x=0,\quad x\in \F^v.
$$
\item[(ii)] Since $I_{u,v}$ is a generalization of $I_u$, (as $u=v$, $I_{u\times v}=I_u$), Definition \ref{d6.2.1} is a generalization of the matrix case.
\end{itemize}
\end{rem}

\begin{prp}\label{p6.2.3} Consider the Kronecker canonical form of $\l B-A$.
\begin{itemize}
\item[(i)] If $B$ is of full row rank, there are no $L^{\T}_{\eta}$ and $N_{\a}$.

\item[(ii)] If $B$ is of full column rank, there are no $L_{\epsilon}$ and $N_{\a}$.

\end{itemize}
\end{prp}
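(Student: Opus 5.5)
The plan is to read off the block sizes and the coefficient matrix of $\l$ in $K(\l)=P(\l B-A)Q$ and compare its rank with $\operatorname{rank}B$. Since $P$ and $Q$ are nonsingular, the coefficient of $\l$ in $K(\l)$ is $PBQ$, and $\operatorname{rank}(PBQ)=\operatorname{rank}B$. So $B$ has full row (column) rank if and only if the $\l$-coefficient of $K(\l)$ does. This coefficient matrix is block diagonal, and its blocks are the $\l$-coefficients of the individual Kronecker blocks listed in Theorem \ref{t5.1.2}. A block diagonal matrix has full row (column) rank if and only if every diagonal block does. I would therefore inspect each block type separately.

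The individual blocks behave as follows.

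For $L_\e$ of size $\e\times(\e+1)$, the $\l$-coefficient is $[I_\e,\ 0]$. It has full row rank $\e$ but not full column rank. For $\e=0$ it is a zero column, which has no rows and still fails full column rank.

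For $L^{\T}_\eta$ of size $(\eta+1)\times\eta$, the $\l$-coefficient is $\begin{bmatrix}0\\ I_\eta\end{bmatrix}$. It has full column rank $\eta$ but not full row rank, since its first row is zero; this also covers $\eta=0$, a zero row.

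For $N_\a$ of size $\a\times\a$, the $\l$-coefficient is the nilpotent shift matrix, of rank $\a-1<\a$. It has neither full row nor full column rank.

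For a Jordan block $J^i_r$, the $\l$-coefficient is $I_r$, which has both full row and full column rank.

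Both statements now follow. For (i), if $B$ has full row rank, then every block's $\l$-coefficient must have full row rank, which excludes every $L^{\T}_\eta$ (including $\eta=0$) and every $N_\a$. For (ii), if $B$ has full column rank, the same argument with columns excludes every $L_\e$ (including $\e=0$) and every $N_\a$.

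The only delicate point is the degenerate blocks $L_0$ and $L^{\T}_0$, which are zero columns or zero rows. The block-diagonal rank argument has to be phrased as counting, not blockwise. The total row count is the sum of the block row counts, while the rank is the sum of the block ranks. Each block's rank is at most its row count, so full row rank forces equality in every block, and a zero row or an $N_\a$ block strictly loses rank. I would state this counting explicitly, once for rows and once for columns.
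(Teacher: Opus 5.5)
Your proof is correct and follows the same idea as the paper: an $L^{\T}_{\eta}$ or $N_{\a}$ block forces a zero row in the $\l$-coefficient of $K(\l)$, and an $L_{\e}$ or $N_{\a}$ block forces a zero column, so that coefficient, and hence $B$, cannot have full row (resp.\ column) rank. You also state explicitly that this coefficient is $PBQ$ with $\operatorname{rank}(PBQ)=\operatorname{rank}B$, a step the paper leaves implicit in its phrase ``$B$ has at least one zero row.''
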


\noindent{\it Proof.} Assume there exists a $L^{\T}_{\eta}$ (or a $N_{\a}$), then it is easy to see that
$B$ has at least one zero row. Hence it can not be of full row rank. Similarly for (ii).
\hfill $\Box$

Consider a tensor ${\cal A}=(a_{i_1,\cdots,i_d}\in \R^{\overbrace{n\times \cdots\times n}^d}$, $\vec{i}=\vec{j}\cup \vec{k}$, 
$|\vec{j}|=s$, $|\vec{k}|=r$,  $u=n^s$, and $v=n^r$. 
$$
A=M^{\vec{j}\times \vec{k}}({\cal A}).
$$
Then 
\begin{align}\label{6.2.4}
B=\begin{cases}
\frac{1}{n^{r-s}}(I_{n^s}\otimes \J^{\T}_{n^{r-s}}),\quad r\geq s,\\
I_r\otimes \J_{n^{s-r}},\quad r<s.
\end{cases}
\end{align}

\begin{rem}\label{r6.2.4} Observing (\ref{6.2.4}), the following are obvious.
\begin{itemize}
\item[(i)] If $r=s$, $B=I_{n^s}$. This fact shows that when $r=s$ the Kronecker canonical form degenerates to Jordan canonical form.
\item[(ii)] When $r>s$ $B$ is of full row rank; and when $r<s$ $B$ is of full column rank.
 \end{itemize}
 \end{rem}

\begin{exa}\label{e6.2.5}

\begin{enumerate}
\item[(1)] 

We first doing some preparation to simplify the compotation.

\begin{itemize}
\item[(i)] Consider a tensor  ${\cal A}=(a_{i_1,i_2,i_3})\in \R^{n\times n\times n}$ and let  $\vec{j}=\{i_1\}$ and $\vec{k}=\{i_2,i_3\}$.
To simplify the calculation, we set
$$
B=I_n\otimes \J^{\T}_n=\Psi_{n\times n^2}.
$$
Then the corresponding 
$$
\tilde{\l}:=\frac{1}{n}\l.
$$
\item[(ii)]
Define
$$
\begin{array}{l}
Q_0:=\d_{n^2}[1,n+1,\cdots,(n-1)n+1;\\
~<2>-<1>,\cdots,<n>-<1>; \\
~<n+2>-<n+1>,\cdots,<2n>-<n+1>;\\
~\cdots;<(n-1)n+2>-<(n-1)n+1>,\\
~\cdots,<n^2>-<(n-1)n+1>],
\end{array}
$$
where
$\d_t(<p>-<q>):=\d_t^p-\d_t^q$.
Then
$$
B*Q_0=[I_n,0_{n\times (n-1)n}].
$$
This elementary column operation can save  many times of column compressions of $B$.
\end{itemize}

\item[(2)] 

Consider $A_0=M^{i_i\times i_2i_3}({\cal A})$ as
$$
A_0=\begin{bmatrix}
2&3&2&0&0&0&0&0&0\\
0&-1&0&0&0&0&2&2&2\\
0&1&0&1&1&1&-1&-1&-1\\
\end{bmatrix}
$$
We look for its KCF.

Using $Q_0$, we have
$$
A=A_0Q_0=\begin{bmatrix}
2&0&0&1&0&0&0&0&0\\
0&0&2&-1&0&0&0&0&0\\
0&1&-1&1&0&0&0&0&0\\
\end{bmatrix}
$$
$$
B_0=I_3\otimes \J^{\T}_3.
$$
$$
B=B_0Q_0=[I_3,0_{3\times 6}].
$$

Then using the following elementary transformations:
$$
\begin{array}{l}
c_1\ra c_1+0.25c_3,\\
r_3\ra r_3-0.25 r_1,\\
c_1\ra c_1+0.25 c_4,\\
c_2\ra c_2-2c_3,\\
r_3\ra r_3+2r_2,\\
c_3\ra c_3-2c_4.
\end{array}
$$
Summarizing them, we have
$$
P=\begin{bmatrix}0&1&1\\
0.25&0&-0.25\\
0.5&-0.25&0.25
\end{bmatrix};
$$
$$
Q=\begin{bmatrix}
0.5&6&1&-1&-1&0&0&0&0\\
-0.25&-4&0&1&0&0&0&0&0\\
0&0&0&0&1&0&0&0&0\\
0.75&2&-1&0&0&-1&-1&0&0\\
0&0&0&0&0&1&0&0&0\\
0&0&0&0&0&0&1&0&0\\
0.25&-2&1&0&0&0&0&-1&-1\\
0&0&0&0&0&0&0&1&0\\
0&0&0&0&0&0&0&0&1\\
\end{bmatrix}
$$
Then
\begin{align}\label{4.101}
\begin{array}{l}
K(\tilde{\l}):=P(A-\tilde{\l} B)Q=\\
\begin{bmatrix}
1-\tilde{\l} &0&0&0&0&0&0&0&0\\
0 &-\tilde{\l}&1&0&0&0&0&0&0\\
0&0&-\tilde{\l}&1&0&0&0&0&0\\
\end{bmatrix}
\end{array}
\end{align}
Hence the KCF of $A$ is
$$
A\sim K(0)=
\begin{bmatrix}
1&0&0&0&0&0&0&0&0\\
0 &0&1&0&0&0&0&0&0\\
0&0&0&1&0&0&0&0&0\\
\end{bmatrix}.
$$
Solving
$$
K(\tilde{\l})y=0.
$$
Let
$$
y=\begin{bmatrix}y^1\\y^2\\y^3\end{bmatrix},~y^1\in \R, ~y^2\in \R^3,~y^3\in \R^5.
$$

$$
y^1=1,~y^2=\begin{bmatrix}1\\\tilde{\l}\\{\tilde{\l}}^2\end{bmatrix},~y^3\in \R^5.
$$
Correspondingly, we have
\begin{itemize}
\item[(1)] Regular Eigenvector:
$$
y=\begin{bmatrix}1\\0\\0\end{bmatrix},
$$
corresponding to eigenvalue $\tilde{\l}=1$.

\item[(2)] Free $\l$ Eigenvector:
$$
y=\begin{bmatrix}0\\y^2\\0\end{bmatrix}, \mbox{where}~ y^2=\begin{bmatrix}1\\\tilde{\l}\\{\tilde{\l}}^2\end{bmatrix}
$$
\end{itemize}

Then we can choose three different $\tilde{\l}$'s to get three corresponding eigenvectors.

\begin{itemize}
\item[(3)] Free vector Eigenvector:
$$
y=\begin{bmatrix}0\\0\\y^3\end{bmatrix}, \mbox{where}~ y^3\in \R^5 arbitrary.
$$

And find linearly independent $y^3$'s, which correspond to $\tilde{\l}=0$.
\end{itemize}
Note that in fact we have
$$
(A-\tilde{\l} B)Qy=0.
$$
And what do we need is to find $\l$ and $x$, satisfying
$$
(A-{\l} B)x=0.
$$
Hence we have $x=Qy$ is the eigenvector with respect to $\l=3\tilde{\l}$.
$$
y^1=1,~y^2=\begin{bmatrix}1\\\tilde{\l}\\{\tilde{\l}}^2\end{bmatrix},~y^3\in \R^5.
$$
Hence, we have
\begin{itemize}
\item[(i)] Regular Eigenvector:
$$
x=(0.5,-0.25,0,0.75,0,0,0.25,0,0)^{\T}
$$
with respect to eigenvalue $\l=3$.

\end{itemize}

\begin{itemize}
\item[(2)] Value-free  Eigenvector:
$$
\begin{array}{l}
x=(6,-4,0,2,0,0,-2,0,0)^{\T}\\
~+\frac{\l}{3}(1,0,0,-1,0,0,1,0,0)^{\T}\\
~+\frac{{\l}^2}{9}(-1,1,0,0,0,0,0,0,0)^{\T}.
\end{array}
$$
Choosing $\l_1\neq \l_2\neq \l3$ arbitrary, the corresponding $x(\l)$ are three eigenvectors with respect to $\l_1$, $\l_2$ and $\l_3$ respectively.

\item[(3)] Vector-free  Eigenvector:
One may choose
$$
x_i:=\Col_i(Q), \quad i=5,6,7,8,9,
$$
as $5$ Free vector Eigenvectors with respect to $\l=0$.

\end{itemize}

Note that $Col(Q)$ consists of (ordinal) eigenvectors of $A$!
\end{enumerate}
\end{exa}

\begin{exa}\label{e6.2.6}

Let ${\cal A}$ be the same as in Example \ref{e6.2.5}.

Consider $A_0=M^{i_ii_2\times i_3}({\cal A})$ as
$$
A_0=\begin{bmatrix}
2&3&2\\
0&0&0\\
0&0&0\\
0&-1&0\\
0&0&0\\
2&2&2\\
0&1&0\\
1&1&1\\
-1&-1&-1\\
\end{bmatrix}
$$
$$
B_0=\Pi^3_9=I_3\otimes \J_3.
$$
We look for its KCF.

Skipping the standard computation, we have
$$
P=\begin{bmatrix}
1&0&0&3&-3&0&2&-2&0\\
0&0&0&0&1&0&0&0&0\\
0&0&0&-1&1&0&0&0&0\\
1&0&0&4&-4&0&3&-2&0\\
0&0&0&0&0&0&-1&1&0\\
-1&1&0&-3&3&0&-2&2&0\\
0&0&0&2&-3&1&2&-2&0\\
-1&0&1&-3&3&0&-2&2&0\\
0&0&0&-2&2&0&-2&1&1\\
\end{bmatrix}
$$
$$
Q=\begin{bmatrix}
1&0&0\\
0&1&0\\
-1&0&1\\
\end{bmatrix}
$$
$$
K(\l)=P(A_0-\l B_0)Q=\begin{bmatrix}
-\l&0&0\\
0&-\l&0\\
0&1&0\\
0&0&-\l\\
0&0&1\\
0&0&0\\
0&0&0\\
0&0&0\\
0&0&0\\
\end{bmatrix}
$$
Then the KCF of $A_0$ is
$$
A_0\sim K(0)=\begin{bmatrix}
0&0&0\\
0&0&0\\
0&1&0\\
0&0&0\\
0&0&1\\
0&0&0\\
0&0&0\\
0&0&0\\
0&0&0\\
\end{bmatrix}
$$

The only eigenvalue of $A_0$ is $\l=0$, its corresponding eigenvector is
$$
\Col_1(Q)=(1,0,-1)^{\T}.
$$
\end{exa}

\section{Conclusion}

Four kinds of eigenvalues/eigenvectors of hypermatrices, namely, OEE, UEE, DEE, and MEE, are proposed via their matricizings. Then the KCF of matrix pencils is used to derive the KCF of hypermatrices. The OEE of hypermatrices can be obtained from their KCF, and KCF is straghtforward combutable by (a) elementary row/column operations; and (b) Jordan canonical form calculation of square matrices. Then UEE, DEE, and MEE, as subsets of OEE, can be picked out from OEE using MDA. Hence, this paper provides a comprehensive algorithm for calculating all the eigenvalues/eigenvectors of hypermatrices. In addition, using virtual identity a canonical form of tensors. called the KCF of tensors, is proposed. It is also straightforward computable. The KCF, containing Jordan canonical form as its special case, reveals the relationship between eigenvalues and eigenvectors of a tensor.

{\bf Acknowledgement}

The author is grateful to Prof.  Jiandong Zhu for valuable discussion on classical KCF calculation of matrices. 

\begin{onecolumn}
\begin{align}\label{6.1.1}
\begin{tiny}
K(\l)=P(\l B-A)Q
=\left[
\begin{array}{cccccccccccccccc}
0&0&\l&-1&0 & 0&0&0&0&  0&0&0&0&0&0&0\\
0&0& 0& 0&\l&-1&0&0&0&  0&0&0&0&0&0&0\\
0&0& 0& 0&0&\l&-1&0&0&  0&0&0&0&0&0&0\\
0&0& 0&0&0&0&  0&  0&0& 0&0&0&0&0&0&0\\
0&0& 0&0&0&0&  0&-1&0& 0&0&0&0&0&0&0\\
0&0& 0&0&0&0&  0&\l&-1& 0&0&0&0&0&0&0\\
0&0&  0&0&0&0&0& 0& \l&-1&0&0&0&0&0&0\\
0&0&  0&0&0&0&0&0&0&  \l&0&0&0&0&0&0\\
0&0&  0&0&0&0&0&0&0&  0&-1&0&0&0&0&0\\
0&0&  0&0&0&0&0&0&0&  0& 0&-1&0&-4&0&0\\
0&0&  0&0&0&0&0&0&0&  0& 0& \l&-1&0&0&0\\
0&0&  0&0&0&0&0&0&0&  0& 0& 0& 0&\l-2&0&0\\
0&0&  0&0&0&0&0&0&0&  0& 0& 0& 0&0&\l-3&0\\
0&0&  0&0&0&0&0&0&0&  0& 0& 0& 0&4&-1&\l-3\\
\end{array}
\right].
\end{tiny}
\end{align}

\end{onecolumn}

\end{document}